\theoremstyle{definition}
\newtheorem{Teo}{Theorem}
\newtheorem{Cor}{Corollary}
\newtheorem{remark}{Remark}
\newtheorem{Lem}{Lemma}
\newtheorem{Prop}{Proposition}
\newtheorem*{Lem1'}{Lemma 1'}
\newtheorem*{Lem2'}{Lemma 2'}
\numberwithin{equation}{section}
\title{Convergence at infinity for solutions of nonhomogeneous degenerate elliptic \\ equations in exterior domains}
\author{L. P. Bonorino \and L. P. Dutra \and F. J. dos Santos}
\date{}
\begin{document}

\maketitle

\begin{abstract}
\par In this work, first we prove that for any compact set $K\subset\mathbb{R}^{n}$ and any continuous function $\phi$ defined on $\partial K$, there exists a bounded weak solution in $C(\overline{\mathbb{R}^{n}\backslash K}) \cap C^1(\mathbb{R}^{n}\backslash K)$ to the exterior Dirichlet problem
$$
\begin{cases}
     -{\rm div}\big(\,|\nabla u|^{p-2}A(\,|\nabla u|\,)\nabla u\,\big)=f   &  \text{ in }\, \mathbb{R}^n \backslash K \\
    \;\;\;\;\;\; u = \phi &  \text{ on } \partial K\\
 \end{cases}
 $$
 provided $p > n$, $A$ satisfies some growth conditions and $f\in L^{\infty}(\mathbb{R}^n)$ meets a suitable pointwise decay rate. We obtain thereafter the existence of the limit at the infinity for solutions to this problem, for any $p\in(1,+\infty)$ and $n\geq2$. Moreover, for $p > n$ we can show that the solutions converge at some rate and for $p <n$ the convergence holds even for some unbounded $f$.
\end{abstract}

\section{Introduction}

In this paper we study two questions about an exterior Dirichlet problem for some nonlinear elliptic PDE. The first one is
the existence of a bounded weak solution in $C(\overline{\mathbb{R}^{n}\backslash K}) \cap C^1(\mathbb{R}^{n}\backslash K)$ for
\begin{equation}\label{nonhomogeneousPr}
  \begin{cases}
     -{\rm div}\big(\,|\nabla u|^{p-2}A(\,|\nabla u|\,)\nabla u\,\big)=f   &  \text{ in }\, \mathbb{R}^n \backslash K \\[5pt]
    \;\;\;\;\;\; u = \phi &  \text{ on } \partial K,\\
 \end{cases}
\end{equation}
where $K$ is any compact set, provided $A$ and $f$ satisfy some suitable conditions and $p > n$.
The second is the existence of the limit for any solution to this problem for $p >1$.

Initially we assume that $A$ satisfies: \\

\begin{equation}\label{A_conditions}
\begin{split}
&i) \,\,\,\, A \in C^1\big([0,+\infty]\big);\\
&ii) \,\,\, \delta\leq A(t) \leq L,\,\, \text{for all}\, \,t\ge0, \,\, \text{where}\,\, \delta, L \,\,\text{are positive constants;}\\
&iii) \,\,\, \delta'\,t^{\,p-2}\,\leq \,\frac{d}{dt}\,\Big\{\, t^{\,p-1}A(t)\,\Big\}\,\leq L'\,t^{\,p-2}\,, \,\,\text{for all}\, \,t\ge0, \,\,\text{where} \,\, \delta', L' \\
&\,\,\,\,\,\,\text{are positive constants.}
\end{split}
\end{equation}

Our first result states the existence and uniqueness of continuous bounded weak
solutions for Dirichlet problems on exterior domains in case $p > n$.

\begin{Teo}\label{TeorPrinc1}
Let $K\subset\mathbb{R}^{n}$ be a nonempty compact set and $\phi\in C(\partial K)$. Assume that the function $A$ satisfies \eqref{A_conditions} and $f\in L^{\infty}(\mathbb{R}^n)$ is such that, for positive constants $C_f$ and $ \epsilon$,
\begin{equation}\label{f}
|f(x)|\,\leq \,C_f\,|x|^{\,-(p+\epsilon)}\,
\end{equation}
for all $|x|$ sufficiently large. If $p>n$, then there exists a unique bounded weak solution $u \in C(\overline{\mathbb{R}^{n}\setminus K})\cap C^1(\mathbb{R}^{n}\setminus K)$ of \eqref{nonhomogeneousPr}.
In addition, if $\phi $ is $\alpha$-H\"{o}lder continuous in $K$, with $\alpha = \frac{p-n}{p-1}$, then $u\in C^{\alpha}(\mathbb{R}^n).$
\end{Teo}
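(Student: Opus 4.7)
The plan is to obtain $u$ by exhausting $\mathbb R^n\setminus K$ with annuli. Fix $R_0>0$ with $K\subset B_{R_0}$; for $R>R_0$ set $\Omega_R:=B_R\setminus K$. By \eqref{A_conditions}(ii)--(iii) the principal operator is strictly monotone and of Leray--Lions type, so on each bounded $\Omega_R$ the standard monotonicity theory yields a unique weak solution $u_R\in W^{1,p}(\Omega_R)\cap C(\overline{\Omega_R})$ of the Dirichlet problem with $u_R=\phi$ on $\partial K$ (after a continuous extension of $\phi$ to $\overline K$) and $u_R\equiv 0$ on $\partial B_R$; interior $C^{1,\alpha}$ regularity of Tolksdorf--DiBenedetto--Lieberman type then applies.

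The next step is to obtain $R$-uniform estimates. A radial ansatz $v(r)=M-cr^{-\beta}$ yields, by direct computation using \eqref{A_conditions}(iii),
\begin{equation*}
-\mathrm{div}\bigl(|\nabla v|^{p-2}A(|\nabla v|)\nabla v\bigr)\;\asymp\;\bigl[(\beta+1)(p-1)-(n-1)\bigr](c\beta)^{p-1}\,r^{-(\beta+1)(p-1)-1}\qquad(r\to\infty),
\end{equation*}
and choosing $\beta=\epsilon/(p-1)$ makes the exponent exactly $-(p+\epsilon)$, matching \eqref{f}; the bracket is positive because $p>n$. For $c$ sufficiently large, $v$ is a supersolution outside a large ball and, patched with a constant on the remaining compact strip and reflected to a symmetric subsolution, it yields $\|u_R\|_{L^\infty(\Omega_R)}\le M'$ independently of $R$. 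Local $C^{1,\alpha}$ compactness then permits a diagonal extraction of a subsequence $u_{R_k}\to u$ in $C^1_{\mathrm{loc}}(\mathbb R^n\setminus K)$; passing to the limit in the weak formulation shows that $u$ is a bounded weak solution in $\mathbb R^n\setminus K$.

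To secure $u\in C(\overline{\mathbb R^n\setminus K})$ with $u|_{\partial K}=\phi$, I would use barriers at each $y_0\in\partial K$ based on the exponent $\alpha=(p-n)/(p-1)$, which is precisely the one making $|x-y_0|^\alpha$ a $p$-harmonic profile (this is the algebraic identity $n-1+(\alpha-1)(p-1)=0$). Condition \eqref{A_conditions} together with the decay of $f$ allows a small perturbation of this profile to yield local super/sub-solutions
\begin{equation*}
w^{\pm}(x)=\phi(y_0)\pm\bigl(\omega_\phi(|x-y_0|)+c|x-y_0|^\alpha+\mathrm{corr}(x)\bigr),
\end{equation*}
where $\omega_\phi$ is the modulus of continuity of $\phi$; comparison sandwiches each $u_R$ between $w^-$ and $w^+$ near $y_0$, and the inequality passes to $u$. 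When $\phi\in C^\alpha(K)$ one has $\omega_\phi(t)=Ct^\alpha$, and combining this boundary Hölder estimate with the interior Hölder regularity from the Tolksdorf--DiBenedetto--Lieberman theory yields $u\in C^\alpha(\mathbb R^n)$.

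\textbf{Uniqueness and main difficulty.} For uniqueness, I would use the growing radial $p$-harmonic barrier $|x|^{(p-n)/(p-1)}$: given two bounded solutions $u,\tilde u$ agreeing on $\partial K$, for any $\eta>0$ one has $u\le\tilde u+\eta|x|^{(p-n)/(p-1)}$ on $\partial K$ and, by boundedness of both solutions, outside a sufficiently large ball; the comparison principle extends the inequality throughout, and letting $\eta\downarrow 0$ gives $u\le\tilde u$, with the opposite inequality symmetric. In my view the main obstacle is producing the outer radial barrier with exactly the right exponent: one must simultaneously tune $\beta$ to the decay rate $\epsilon$ of $f$ and rely on \eqref{A_conditions}(iii), which is precisely what makes the leading-order term of $-\mathrm{div}(|\nabla v|^{p-2}A(|\nabla v|)\nabla v)$ a clean power of $r$ rather than an uncontrolled expression involving $A'$. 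Without this structural assumption, neither the uniform $L^\infty$ bound underlying the exhaustion scheme nor the comparison argument used for uniqueness would go through.
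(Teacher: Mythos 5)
Your overall architecture (exhaustion by bounded domains, a bounded radial supersolution for the uniform $L^\infty$ bound, compactness, barriers with exponent $\alpha=\frac{p-n}{p-1}$ at $\partial K$, comparison for uniqueness) is the same as the paper's, but three steps contain genuine gaps. First, you solve the approximating problems on $B_R\setminus K$ directly and assert that ``standard monotonicity theory'' produces $u_R\in C(\overline{\Omega_R})$ with $u_R=\phi$ on $\partial K$. For an arbitrary compact $K$ with no boundary regularity and $\phi$ merely continuous (not in $W^{1,p}$), this is not standard --- continuous attainment of the datum on $\partial K$ is essentially part of what the theorem asserts. The paper avoids the circularity by solving on $B_{R_m}\setminus K_m$ with \emph{smooth} compacts $K_m\Supset K$ shrinking to $K$, and only establishes the boundary behaviour at $\partial K$ in the limit, via the barriers of Lemma \ref{lem1}. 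Second, your outer supersolution $v=M-cr^{-\beta}$, $\beta=\epsilon/(p-1)$, is verified only by the pure $p$-Laplacian computation. For radial $v$ with $v'>0$ one has
\begin{equation*}
-\mathrm{div}\bigl(|\nabla v|^{p-2}A(|\nabla v|)\nabla v\bigr)=\varphi'(v')\,|v''|-\tfrac{n-1}{r}\,\varphi(v'),\qquad \varphi(t)=t^{p-1}A(t),
\end{equation*}
and using only \eqref{A_conditions} the best lower bound for the right-hand side is $\bigl[\delta'(\beta+1)-(n-1)L\bigr](c\beta)^{p-1}r^{-(\beta+1)(p-1)-1}$. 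Since \eqref{A_conditions} permits $\delta'$ much smaller than $(n-1)L(p-1)/(p-1+\epsilon)$, this bracket can be negative and one can construct admissible $A$ for which your profile is genuinely not a supersolution at some radii; the positivity of ``$(\beta+1)(p-1)-(n-1)$'' saves you only when $A$ is constant. The paper's Lemma \ref{lemma2} sidesteps this entirely by integrating the radial ODE exactly, i.e.\ setting $v'(r)=\varphi^{-1}\bigl(r^{1-n}(C-\int_1^r g\,s^{n-1}ds)\bigr)$ with $g\ge|f|$, so the supersolution property holds by construction and the two-sided bounds on $\varphi^{-1}$ are used only to estimate the size of $v$.

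Third, your uniqueness argument compares $u$ with $\tilde u+\eta|x|^{(p-n)/(p-1)}$. This fails for the nonlinear operator: the sum of a solution $\tilde u$ and a ($p$-harmonic) barrier is in general not a supersolution of $-\mathrm{div}(|\nabla w|^{p-2}A(|\nabla w|)\nabla w)=f$, because $\mathbf A(\xi+\eta\zeta)\ne\mathbf A(\xi)+\eta\,\mathbf A(\zeta)$; moreover $|x|^\alpha$ is $p$-harmonic but not annihilated by the general $A$-operator. The paper instead invokes the exterior-domain comparison principle of Bonorino--da Silva--Zingano (\cite{BSZ}, Theorem 2) for bounded solutions when $p\ge n$, which is proved by a different (energy/monotonicity) mechanism. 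Your boundary-continuity and H\"older steps are in the right spirit, though the ``$\mathrm{corr}(x)$'' term and the use of $\omega_\phi(|x-y_0|)$ inside a supersolution would need to be replaced by the actual radial supersolutions of Lemma \ref{lem1} with the parameter $a$ chosen large enough to dominate $u$ on the outer sphere, and the global $C^\alpha$ bound requires the additional two-step comparison (barriers centred both at points of $K$ and at exterior points) carried out in Step 3 of the paper's proof.
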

\par We point out the full generality allowed for the boundary $\partial K$, for which no regularity has to be assumed.
This result also does not require that $\phi \in W^{1,p}_{loc}(\mathbb{R}^n)$, which would imply the existence of local $C^{0,1-n/p}$ solutions by the use of minimizing methods and Morrey's inequality (see \cite{Lq2}). Moreover, as it is straightforward from the proof, the result also holds for bounded domains, being necessary only to assume $f\in L^{\infty}(\mathbb{R}^n)$.
\par Many efforts were directed to elliptic problems on unbounded domains. For instance, Meyers and Serrin \cite{MS} have made important clarifications on the existence and uniqueness of bounded solutions of linear exterior problems, and some of their results were extended to several classes of semilinear equations by Kusano \cite{K}, Ogata \cite{O}, Noussair \cite{N1, N2}, Furusho et. al. \cite{Fu1, Fu2}, Phuong C\'ac \cite{Cac}, among others.

In our theorem, the regularity up to the boundary is proved with use of barriers following the ideas from Serrin \cite{S1}, where the main concern was the Liouville property for entire solutions of homogeneous problems. An extension of the Liouville property for exterior solutions was obtained by Bonorino et. al. \cite{BSZ}, which motivated our first result.
Indeed this one generalizes a result presented in \cite{BSZ} stating that for $p>n$ and for a finite set $P\subset\mathbb{R}^n$, there exists a bounded $p$-harmonic function in $\mathbb{R}^n\backslash P$ attaining any prescribed data in $P$.

\

\par Another question arising on exterior problems is the behavior of solutions at infinity, which is related to the theory of singularities of solutions. A pioneer in this field is the work of Gilbarg and Serrin \cite{G-S}, that shows the existence of the limit of solutions for nonhomogeneous linear elliptic equations on a singular point and also at infinity in the homogeneous case. The argument depends essentially on the Harnack inequality combined with a Maximum Principle. Later, a variety of results on removable singularities and the asymptotic behavior for several equations were obtained by Serrin \cite{Se2, Se3, Se4, Se5}, Serrin and Weinberger \cite{SW}, and others. \cite{Se3} presents a detailed description of the asymptotic behavior at the origin and at infinity of positive solutions of the homogeneous quasilinear equation ${\rm div}\mathcal{A}(x,Du)=0$. It was shown that positive solutions $u$ always converge at infinity to a possibly infinite limit $\ell$ and, moreover, either $u$ satisfies a {\em  maximum principle at infinity } or else $\ell$ is infinite if $p\ge n$ and finite if $p<n$, and it holds

\begin{equation*}
\begin{split}
u\,&\approx\,\,r^{\,(p-n)/(p-1)}\,,\,\,\,\,\,\,\,\,\,\,\,\,\;  p>n\\
u\,&\approx\,\,\log\,r \,,       \,\,\,\,\,\,\,\,\,\,\,\,\,\,\,\,\,\,\,\,\,\,\,\,\,\,\,\,\,  p=n \\
u-\ell\,&\approx\,\,\pm\, r^{\,(p-n)/(p-1)}\,, \,\,\,\,\,\,\,  p<n\\
\end{split}
\end{equation*}
where $u\approx v$ means that there exists positive constants $c, C$ such that $c\,v\le u \le C\,v$. The function $u$ is said to satisfy the maximum principle property at infinity if in any neighbourhood of infinity either $u$ is constant or else takes on values both greater and less than $\ell<+\infty$.
More recently, it was proved by Fraas and Pinchoverin in \cite{FrP2} the existence of limit near singularities for nonnegative solutions of
$$-\Delta_p\,u+V\,|u|^{p-2\,}u=0$$
assuming that near the singularity the potential $V$ belong to a \textit{Kato class} and
$$V\in L^{\infty}_{loc}\,,\,\,\,|x|^p\,|V(x)|\le C\,,\,\,\,\text{for some constant }C\,.$$

\

This motivates our second result, where we investigate the existence of the limit at infinity for bounded solutions $u\in C^{1}(\mathbb{R}^{n}\setminus K)$ of \eqref{nonhomogeneousPr}
for any $p >1$, assuming only the following hypotheses on $A$:
\begin{equation}\label{A_conditions2}
\begin{split}
&i)\,\,\, A \in C\big([0,+\infty]\big);\\
&ii) \,\,\,\delta\leq A\leq L\,,\,\, \text{for positive constants } \,\delta, L\,;\quad\quad\quad\quad\quad\quad\quad\quad\quad\quad\quad\quad\quad\quad\\
&iii) \,\,\,t\mapsto t^{\,p-1}A(t)\,\, \text{is strictly increasing for }\,t>0\,.\\
\end{split}
\end{equation}

We prove the existence of the limit at infinity for such solutions, for any $p\in(1,+\infty)$, provided $f$ satisfies some suitable conditions. In the case $p<n$, by employing Harnack inequality arguments like in \cite{G-S} and comparison results for spherical rearrangements established by Talenti \cite{T1, Talenti} and others, we can obtain a sufficient condition for the existence of the limit in terms of some integrability of $f$ at infinity. For $p>n$, requiring that $f$ satisfies the growth condition \eqref{f}, we give a barrier argument to show the existence of the limit at infinity and that the convergence has a positive order $\beta$. Indeed, for any $p >1$, we prove that \eqref{f} guarantees the existence of the limit at infinity of any bounded solution. \\

\par On the matter of the behavior of the solutions at infinity, we can assume with no loss of generality  $K=\overline{B_1}$, where $B_1$ is the open ball of radius $1$ centered at the origin.
 Our second theorem then reads as follows.

\begin{Teo}
   Let $u\in C^{1}(\mathbb{R}^{n}\backslash B_1)$ be a bounded weak solution of \eqref{nonhomogeneousPr} in $\mathbb{R}^{n}\backslash B_1$, where $A$ satisfies \eqref{A_conditions2}. Then the following hold:
\\[5pt] $(a)$ for $p >1$ the limit of $u$ at infinity exists provided $f$ satisfies \eqref{f}; \\
(In this case, assuming that $u$ is only bounded from above (or from below), then either $u$ converges at infinity or $\lim_{|x| \to +\infty} u(x) = -\infty \; (\text{or} +\infty)$;)
\\[5pt] $(b)$ for $1< p< n$ the limit of $u$ at infinity exists if, more generally, $f$ satisfies
  \begin{equation}\label{Lr}
  f\in L^{r}(\mathbb{R}^{n}\backslash B_1) \,,\,\,\text{ for some } \,\,r\,<\,n/p
  \end{equation}
  and
  \begin{equation}\label{Ltheta}
  f\in L^{\,\frac{n}{p-\theta}}(\mathbb{R}^n\backslash B_1) \,,\,\,\text{ for some }\,\,\theta\in(0,1)
   \end{equation}
  with
  \begin{equation}\label{Kgoes0}
  \lim_{R\rightarrow +\infty}\,\,\,R^{\theta}\,\|\,f\,\|_{L^{\,\frac{n}{p-\theta}}(\,\mathbb{R}^n\backslash B_R(0)\,)}\,=\,0;
  \end{equation}
\\$(c)$ for $p>n$, if $f$ satisfies \eqref{f}, there are positive constants $C, \beta$ such that
  $$ \big|\,\underset{\,|x|\rightarrow\infty}{\lim u}-u(x)\,\big| < C|x|^{-\beta} \quad \text{ for all } \quad |x| \quad {\rm large.} $$
Moreover, for $p\ge n$ these results also hold for unbounded weak solutions $u$ provided we assume that
\begin{equation}
  \lim_{x \to \infty} \frac{|u(x)|}{|x|^{\alpha}}=0 \quad {\rm where} \quad \alpha=\frac{p-n}{p-1} \quad \text{ if } \quad p > n
\label{alpha-subgrowth}
\end{equation}
or
\begin{equation}
  \lim_{x \to \infty} \frac{|u(x)|}{\log|x|}=0 \quad \text{ if } \quad p=n.
\label{alpha-subgrowth2}
\end{equation} \\
\label{TeorPrinc}
\end{Teo}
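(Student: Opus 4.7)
\emph{Strategy.} Set $M=\limsup_{|x|\to\infty}u$ and $m=\liminf_{|x|\to\infty}u$; both are finite when $u$ is bounded. The goal in (a), (b), (c) is to show $M=m$, with a quantitative rate in (c). I handle (c) (and the $p>n$ range of (a)) via radial sub- and supersolutions, and (b) (and the $p<n$ range of (a)) via a Harnack-plus-symmetrization oscillation decay. For each case the key step is an oscillation contraction across dyadic annuli that vanishes as $R\to\infty$.

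\emph{Case (c).} The main calculation is that, for $w(r)=\ell\pm Cr^{-\beta}$ and $r$ large,
\[
-\operatorname{div}\bigl(|\nabla w|^{p-2}A(|\nabla w|)\nabla w\bigr)\;\sim\;\pm(n-p-\beta(p-1))(C\beta)^{p-1}A(0)\,r^{-p-\beta(p-1)}.
\]
For $p>n$ the coefficient $n-p-\beta(p-1)$ is negative for $\beta>0$ small, so $\ell+Cr^{-\beta}$ is a subsolution and $\ell-Cr^{-\beta}$ a supersolution on $\{|x|\ge R_0\}$ once $\beta\le\epsilon/(p-1)$ and $C$ is large enough for the right-hand side to dominate $|f|\le C_f\,r^{-p-\epsilon}$. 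Applying the comparison principle to these barriers, shifted by constants to match $\sup_{\partial B_R}u$ and $\inf_{\partial B_R}u$ on each sphere, on the bounded annuli $B_{R'}\setminus B_R$ and letting $R'\to\infty$ yields a dyadic oscillation estimate of order $R^{-\beta}$; iterating gives both the existence of $\ell:=\lim u$ and the rate $|u(x)-\ell|\le C|x|^{-\beta}$.

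\emph{Case (b).} For $1<p<n$ bounded radial barriers decay too slowly to match $f$ in general, so I follow Gilbarg--Serrin: apply the weak Harnack inequality to the nonnegative functions $u-\inf_{|x|\ge R}u$ and $\sup_{|x|\ge R}u-u$ on annuli $B_{2R}\setminus B_R$, combined with a Talenti-type symmetrization \cite{T1,Talenti} that bounds the exterior variation of $u$ by the $L^{n/(p-\theta)}$-norm of $f$ (weighted by the appropriate power of $R$), to obtain an oscillation contraction
\[
\operatorname{osc}_{B_{3R/2}\setminus B_{5R/4}}u\;\le\;\gamma\operatorname{osc}_{B_{2R}\setminus B_R}u+\operatorname{err}(R),
\]
with $\gamma<1$ and $\operatorname{err}(R)\to 0$ thanks to \eqref{Lr} and \eqref{Kgoes0}. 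Iterating over dyadic annuli shows $\operatorname{osc}_{|x|\ge R}u\to 0$, i.e.\ $M=m$.

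\emph{Case (a), the unbounded case, and main obstacle.} Case (a) reduces to (b) when $p<n$ and to (c) when $p>n$: the pointwise decay \eqref{f} implies \eqref{Lr}, \eqref{Ltheta} and \eqref{Kgoes0} by direct integration (e.g.\ $R^\theta\|f\|_{L^{n/(p-\theta)}(B_R^c)}\lesssim R^{-\epsilon}\to 0$ for any $\theta\in(0,\epsilon)$); the borderline $p=n$ is handled by replacing the power barrier with a logarithmic one. For unbounded $u$ satisfying \eqref{alpha-subgrowth} or \eqref{alpha-subgrowth2}, the subgrowth assumption rules out the fundamental-solution-type asymptotics, so after subtracting a suitable multiple of $|x|^{(p-n)/(p-1)}$ (or $\log|x|$) the analysis reduces to the bounded case. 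The most delicate step is case (b): since the Harnack constant $1/\gamma$ exceeds $1$, the iteration only yields genuine decay because $\operatorname{err}(R)$ shrinks fast enough across dyadic scales---this is exactly what \eqref{Kgoes0} quantifies---and one must additionally check that Talenti's symmetrization, classically formulated for $|\nabla u|^{p-2}\nabla u$, survives under our weight $A(|\nabla u|)$, which is ensured by the strict monotonicity of $t\mapsto t^{p-1}A(t)$ in \eqref{A_conditions2}.
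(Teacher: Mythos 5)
Your case (c) contains the main gap. Radial barriers of the form $\ell\pm Cr^{-\beta}$ centred at the origin and shifted to match $u$ on $\partial B_R$ can only yield the \emph{additive} estimate $m_R-CR^{-\beta}\le u\le M_R+CR^{-\beta}$ on $\mathbb{R}^n\setminus B_R$ (this is exactly Proposition \ref{liouville} of the paper, its ``almost maximum principle''), whence $\operatorname{osc}_{S_{2R}}u\le \operatorname{osc}_{S_R}u+2CR^{-\beta}$ with coefficient $1$. That is not a contraction: iterating it gives neither the existence of the limit nor a rate. The paper's essential extra ingredient is its Lemma 3, a chaining argument along the sphere $S_{2R}$ using the local supersolutions of Lemma 1' centred at a bounded number of points of $S_{2R}$, which produces $\operatorname{osc}_{S_{2R}}u\le C\big(\operatorname{osc}_{S_R}u+KR^{-\epsilon/(p-1)}\big)$ with $C<1$; only then does the dyadic iteration give the power decay. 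Two secondary problems in your comparison step: on $B_{R'}\setminus B_R$ with $R'\to\infty$ the bounded barrier need not dominate $u$ on the outer sphere $\partial B_{R'}$ — the paper closes this by adding the unbounded supersolutions $v_a\sim a\,r^{\alpha}$ of Lemma 2' and letting $a\to0$ — and your barriers are built around the limit $\ell$, whose existence is precisely what is to be proved, so the argument must be organized around $M_R$ and $m_R$ instead.

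A related soft spot in your (a)/(b): decay of the oscillation over overlapping dyadic annuli does not imply existence of the limit unless the errors are summable — the paper's own counterexample $u=\cos(\log\log|x|)$ has oscillation over $B_{2R}\setminus B_R$ of order $1/\log R\to0$ yet no limit. Moreover, a Harnack chain with a bounded number of balls cannot control the oscillation over a full exterior region, so the contraction you state between nested annuli does not iterate as written. The paper instead fixes $\varepsilon>0$, chooses spheres $S_{R_k}$ through points where $u$ is within $\varepsilon$ of its $\liminf$, spreads this over each such sphere by the Harnack inequality (Corollaries \ref{SRHarnack1} and \ref{SRHarnack}), and then controls $u$ on the entire, possibly very thick, annulus between consecutive such spheres: by Talenti's symmetrization (Corollary \ref{cortalenti}) when $p<n$, and by the global barriers of Lemma 2' when $p\ge n$. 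You correctly identify Harnack plus Talenti as the tools for (b), and your verification that \eqref{f} implies \eqref{Lr}--\eqref{Kgoes0} is fine, but the argument must be assembled along these lines to actually conclude.
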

\noindent
\par\textbf{Remarks:} Condition \eqref{f} is a particular case of conditions \eqref{Lr}, \eqref{Ltheta}, \eqref{Kgoes0} if and only if $\epsilon>0$. In addition, the result is the best possible with respect to the exponent $-p-\epsilon<-p$ in \eqref{f}. In fact, the theorem is false if $\epsilon=0$, a counterexample being given by the function
$$u(x) = \cos\,(\,\log\,\log |x|\,)\,,\,\,\text{ for }\,\,|x|>1.$$
Clearly $u$ does not attain a limit at infinity and satisfies
$$|\Delta_p u(x)|\leq C\,(\log|x|)^{-p+1}\,|x|^{-p}\,,\,\,\text{ for all }\,\,|x|\ge 2$$
for some positive constant $C$. In this same example observe that, in the case $\frac{p}{p-1}<n$, $f$ satisfies conditions \eqref{Ltheta} and \eqref{Kgoes0}, $f\in L^{s}(\mathbb{R}^n\backslash B_2(0))$ for $s \ge n/p$, but fails to satisfy \eqref{Lr} for any $r<n/p$. Hence, in this setting, assuming that $f\in L^{r}(\mathbb{R}^{n}\backslash B_1)$, condition $r < n/p$ is optimal in \eqref{Lr}.

In this work, we present in Section 2 some preliminary concepts and results, that include a Harnack inequality, the construction of radially symmetric barriers and some estimates for the symmetrization of solutions. Theorem \ref{TeorPrinc1} is proved in Section 3 and Theorem \ref{TeorPrinc} is split into Section 4 and 5, since different techniques are used for the cases $p \ge n$ and $p < n$.

\section{Preliminaries}

\subsection{Existence results and comparison principle}

 \par We recall that a function $u\in W^{1,p}_{loc}(\Omega)$ on some domain $\Omega\subseteq\mathbb{R}^{n}$ is a weak solution of
\begin{equation}\label{main}
-{\rm div}\big(\,|\nabla u|^{p-2}A(|\nabla u|)\nabla u\,\big)\,=\,f \,\,\,  \text{ in }\, \Omega
\end{equation}
if $u$ satisfies
\begin{equation}\label{weak}
\int_{\Omega}|\nabla u|^{p-2}A(|\nabla u|)\nabla u \cdot \nabla \eta \, dx \,=\, \int_{\Omega}\,f\eta \, dx
\end{equation}
for all $\eta\in C^{\infty}_{0}(\Omega)$. Solutions of \eqref{main} correspond to the critical points of a functional $J$ whose Lagrangian is
$$L(q,z,x) = \int_{0}^{\,|q|}\varphi(t)\,dt\,-\,z\,f(x)\,,\quad\,(q,z,x)\in \mathbb{R}^n\times\mathbb{R}\times\Omega\,$$
where $\varphi(t)=t^{p-1}A(t)$. Assuming that $A$ satisfies \eqref{A_conditions}, it follows that $L$ is $C^2$ and convex in the variable $q$, with
$$L_{\,q_i\,q_j}(q,z,x)\,\xi_i\,\xi_j \,\ge\,\frac{1}{2} \min\,\big\{\,\delta\,|q|^{\,p-2},\,\varphi'(|q|)\,\big\}\,|\xi|^2\quad\text{for all}\,\,\xi\in\mathbb{R}^n,$$
and satisfies
$$L(q,z,x)\,\le\,C(\,|q|^p+|z|^p+1\,)$$
$$D_{z}\,L(q,z,x)\,\le\,C(\,|q|^{p-1}+|z|^{p-1}+1\,)$$
$$D_{q}\,L(q,z,x)\,\le\,C(\,|q|^{p-1}+|z|^{p-1}+1\,).$$
Furthermore, if $\Omega$ is bounded and $g\in W^{1,p}(\Omega)$, we have that the functional $J(v) = \int_{\Omega} L(Dv,v,x) \, dx$ is bounded from below on the class $$\mathcal{A}_g\,=\,\{\,u\in W^{1,p}(\Omega)\,\,|\,\,u-g\in W^{1,p}_0(\Omega)\,\}.$$
Therefore, by classical variational techniques, a minimizer $u\in \mathcal{A}_g$ for $J$ is granted to exist for any given $f\in L^{\infty}(\Omega)$ and $g\in W^{1,p}(\Omega)$. Moreover such minimizer is a solution of \eqref{main} and, if $\Omega$ is also smooth, then $u=g$ on $\partial \Omega$ in the trace sense.
These hypotheses on $\Omega$ also guarantee that $u$ is bounded. Indeed, for $p > n$, this a consequence of Morrey's Inequality and, for $p \le n$, the boundedness of $u$ can be found in Theorem 3.12 of \cite{MZ}, that is an application of Moser iteration.  Hence, condition \eqref{A_conditions} implies the $C^{1,\beta}$ regularity up to the boundary of $u$ (see Theorem 1 of \cite{Lieberman}), that can be stated as follows:

\begin{Teo}\label{existence1}
Suppose that $\Omega$ is a bounded domain of class $C^{1,\alpha}$, $\alpha>0$, and $A$ satisfies \eqref{A_conditions}. If $g\in C^{1,\alpha}(\overline{\Omega})$, there exists a unique weak solution $u\in C^{1,\beta}(\overline{\Omega})$, for some $\beta>0$, of \eqref{main} satisfying $u=g$ in $\partial\Omega$. The H\"{o}lder seminorm $|u|_{1+\beta}$ depends only on $|g|_{1+\alpha}$, $\,\alpha, \,n, \,p, \,|\Omega|, \,\sup_\Omega\,|u|$, and the parameters of the equation.
\end{Teo}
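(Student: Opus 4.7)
The plan is to assemble the theorem from three ingredients already foreshadowed in the preceding discussion: (i) existence of a minimizer via the direct method, (ii) boundedness of that minimizer, and (iii) Lieberman's $C^{1,\beta}$ boundary regularity, and then close with a short uniqueness argument.

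First I would establish existence by the direct method applied to $J(v)=\int_{\Omega}L(Dv,v,x)\,dx$ on $\mathcal{A}_g$. The growth bound $L(q,z,x)\le C(|q|^p+|z|^p+1)$ shows $J$ is well defined; the lower bound from $\varphi(t)\ge \delta' t^{p-1}/(p-1)$ (integrated from \eqref{A_conditions}(iii)) together with the Poincar\'e inequality on $u-g\in W^{1,p}_0(\Omega)$ gives coercivity modulo the linear $\int f v$ term. Since $L(\cdot,z,x)$ is convex, $J$ is weakly lower semicontinuous on $W^{1,p}(\Omega)$; hence a minimizer $u\in\mathcal{A}_g$ exists. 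Computing the Euler--Lagrange equation with the bounds on $D_qL$ and $D_zL$ verifies that $u$ is a weak solution of \eqref{main}, and $u-g\in W^{1,p}_0(\Omega)$ encodes the boundary condition $u=g$ on $\partial\Omega$ in the trace sense.

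Next I would control $\|u\|_{L^\infty}$: for $p>n$, Morrey's inequality applied on $\Omega$ (bounded) gives $u\in C^{0,1-n/p}(\overline{\Omega})$, hence $L^\infty$; for $p\le n$, Moser iteration on the weak form---as in Theorem 3.12 of \cite{MZ}---yields an $L^\infty$ bound depending on $\|f\|_{L^\infty}$ and the geometry of $\Omega$. With boundedness in hand, the structure conditions encoded in \eqref{A_conditions} fit Lieberman's framework: the vector field $\mathcal{A}(\xi)=|\xi|^{p-2}A(|\xi|)\xi$ has $C^1$ regularity off zero, and \eqref{A_conditions}(iii) provides the two-sided ellipticity $\delta'|\xi|^{p-2}|\zeta|^2\le D_\xi\mathcal{A}(\xi)\zeta\cdot\zeta\le L'|\xi|^{p-2}|\zeta|^2$ required by Theorem 1 of \cite{Lieberman}; together with $g\in C^{1,\alpha}(\overline{\Omega})$ and $\partial\Omega\in C^{1,\alpha}$, that theorem yields $u\in C^{1,\beta}(\overline{\Omega})$ with the stated dependence of the seminorm on $|g|_{1+\alpha},\alpha,n,p,|\Omega|,\sup|u|$ and the ellipticity constants.

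For uniqueness, if $u_1,u_2$ are two solutions with the same boundary data, I would use $\eta=u_1-u_2\in W^{1,p}_0(\Omega)$ as a test function in the difference of the weak formulations and invoke strict monotonicity of $\xi\mapsto|\xi|^{p-2}A(|\xi|)\xi$, which follows from \eqref{A_conditions}(iii) (equivalently from strict convexity in $q$ of $\int_0^{|q|}\varphi(t)\,dt$). This forces $\nabla u_1=\nabla u_2$ a.e., and the zero trace of $u_1-u_2$ yields $u_1=u_2$.

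None of the steps is particularly delicate given \eqref{A_conditions}; the only point requiring care is the verification that the structure of $\mathcal{A}$ matches exactly the hypotheses of Lieberman's result, since his statement is phrased for general $\mathcal{A}(x,u,\xi)$ with a list of structure conditions rather than for the specific form appearing here. The main obstacle is therefore bookkeeping: translating \eqref{A_conditions} into Lieberman's inequalities and tracking how the constants propagate into the final estimate on $|u|_{1+\beta}$.
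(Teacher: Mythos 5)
Your proposal is correct and follows essentially the same route as the paper: the direct method on $J$ over $\mathcal{A}_g$ for existence, Morrey's inequality (for $p>n$) or Moser iteration as in Theorem 3.12 of \cite{MZ} (for $p\le n$) for boundedness, Theorem 1 of \cite{Lieberman} for $C^{1,\beta}(\overline{\Omega})$ regularity, and strict monotonicity of $\xi\mapsto|\xi|^{p-2}A(|\xi|)\xi$ for uniqueness. The only small slip is that the two-sided ellipticity $D_\xi\mathcal{A}(\xi)\zeta\cdot\zeta\asymp|\xi|^{p-2}|\zeta|^2$ uses both (ii) and (iii) of \eqref{A_conditions} (the eigenvalues of $D_\xi\mathcal{A}$ are $\varphi'(|\xi|)$ and $\varphi(|\xi|)/|\xi|$), not (iii) alone, but this does not affect the argument.
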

\par This result can be extended for the case that $\phi$ is a continuous boundary data. For that take a sequence of $C^{1,\alpha}$ functions $g_n$ that converges to $\phi$ uniformly and consider the respective solutions $u_n$ given by Theorem \ref{existence1}.  From the comparison principle as stated in Theorem \ref{ComparisonPrincipleForBounded}, $u_n$ converges uniformly to a continuous function $u$ in $\overline{\Omega}$, that is also a weak solution of \eqref{main}. According to local regularity results (for instance, see Theorem 1 of \cite{Tolksdorf}), $u \in C^1(\Omega)$ and $\nabla u$ is a H\"older continuous function on compact subsets of $\Omega$.

\begin{Teo}\label{existenceForContinuousBoundaryData}
Suppose that $\Omega$ is a bounded domain of class $C^{1,\alpha}$, $\alpha>0$, and $A$ satisfies \eqref{A_conditions}. If $\phi\in C(\partial{\Omega})$, there exists a unique weak solution $u\in C(\overline{\Omega}) \cap C^{1}(\Omega)$ of \eqref{main} satisfying $u=\phi$ in $\partial\Omega$. Moreover, $\nabla u$ is a H\"older continuous function on compact subsets of $\Omega$.
\end{Teo}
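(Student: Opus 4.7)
The plan is to execute the density-and-stability argument already sketched in the paragraph just before the theorem. Uniqueness is immediate: if $u_1,u_2 \in C(\overline{\Omega})\cap C^1(\Omega)$ were two weak solutions sharing the datum $\phi$ on $\partial\Omega$ and right-hand side $f$, the comparison principle Theorem \ref{ComparisonPrincipleForBounded} forces $u_1 \equiv u_2$. So all the work lies in existence and regularity.

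For existence I first approximate the boundary datum. Using Tietze, extend $\phi$ to $\Phi \in C(\overline{\Omega})$ and mollify a smooth extension of $\Phi$ to produce a sequence $g_k \in C^{\infty}(\overline{\Omega}) \subset C^{1,\alpha}(\overline{\Omega})$ with $g_k|_{\partial\Omega} \to \phi$ uniformly. Theorem \ref{existence1} then yields, for each $k$, a weak solution $u_k \in C^{1,\beta_k}(\overline{\Omega})$ of \eqref{main} with $u_k = g_k$ on $\partial\Omega$. Applying Theorem \ref{ComparisonPrincipleForBounded} to the pair $u_k, u_l$ (same right-hand side $f$, boundary data differing by $g_k - g_l$) gives
\[
\sup_{\overline{\Omega}}|u_k - u_l| \;\le\; \sup_{\partial\Omega}|g_k - g_l|,
\]
so $\{u_k\}$ is uniformly Cauchy in $C(\overline{\Omega})$ and converges to some $u \in C(\overline{\Omega})$ with $u|_{\partial\Omega} = \phi$. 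Comparing $u_k$ with the solution associated to a fixed $g_{k_0}$ also yields a uniform bound $\|u_k\|_{L^\infty(\Omega)} \le M$.

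The main obstacle is to verify that this limit $u$ is itself a weak solution: the divergence-form operator is nonlinear in $\nabla u_k$, and $C^0$ convergence $u_k\to u$ alone does not control the gradients. To overcome this I invoke Tolksdorf's interior $C^{1,\gamma}$ estimate (Theorem 1 of \cite{Tolksdorf}) on any subdomain $\Omega' \Subset \Omega$: together with the uniform $L^\infty$ bound on $u_k$ and on $f$, this produces a constant independent of $k$ with
\[
\|u_k\|_{C^{1,\gamma}(\overline{\Omega'})} \;\le\; C\bigl(\Omega',M,\|f\|_\infty,p,\delta,L\bigr).
\]
Arzelà--Ascoli and a diagonal extraction over an exhaustion $\Omega_j \Subset \Omega$ then give a subsequence, still denoted $u_k$, with $\nabla u_k \to \nabla u$ locally uniformly in $\Omega$. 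Because of the bounds on $A$ in \eqref{A_conditions}, the map $q \mapsto |q|^{p-2}A(|q|)q$ is continuous on $\mathbb{R}^n$, so $|\nabla u_k|^{p-2}A(|\nabla u_k|)\nabla u_k \to |\nabla u|^{p-2}A(|\nabla u|)\nabla u$ uniformly on $\mathrm{supp}\,\eta$ for any $\eta \in C^\infty_0(\Omega)$, and passing to the limit in \eqref{weak} is immediate.

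Finally, applying the same Tolksdorf estimate to the limit $u$ itself delivers $u \in C^1(\Omega)$ with $\nabla u$ H\"older continuous on every $\Omega' \Subset \Omega$, which completes the conclusion. The hard point is really the nonlinear passage to the limit; everything else — the approximation of $\phi$, the Cauchy property, and the boundary trace — follows from the comparison principle and the already-established interior regularity theory.
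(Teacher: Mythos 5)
Your argument is correct and is essentially the one the paper sketches in the paragraph preceding the theorem statement: approximate $\phi$ uniformly by $C^{1,\alpha}$ data $g_k$, obtain $u_k$ from Theorem~\ref{existence1}, deduce uniform Cauchyness from the comparison principle (shifting $u_l$ by $\sup_{\partial\Omega}|g_k-g_l|$), and then invoke Tolksdorf's interior $C^{1,\gamma}$ estimate to pass to the limit in the weak formulation and to conclude $u\in C^1(\Omega)$ with locally H\"older gradient. You supply slightly more detail than the paper — in particular you spell out the uniform interior $C^{1,\gamma}$ bounds, the diagonal extraction over an exhaustion, and the continuity of $q\mapsto|q|^{p-2}A(|q|)q$ needed to pass to the limit — but the route is the same.
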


\par Along this work, we need the following comparison principle, that is a particular case of Theorem 2.4.1 of \cite{SerrinPucci}. Note that the vector function $\mathbf{A}(\xi)=|\xi|^{\,p-2}A(\,|\xi|\,)\,\xi \,$ satisfies the monotonicity condition
$$ \big(\mathbf{A}(\xi)-\mathbf{A}(\eta)\big)\cdot\big(\xi-\eta\big)\,>\, 0\,,\,\,\,\,\text{for all}\,\,\,\,\,\xi,\,\eta\in\mathbb{R}^n,\,\,\,\, \xi\neq\eta\,.$$

\begin{Teo}
Suppose that $A$ satisfies \eqref{A_conditions2} and $u$, $v\in C(\overline{\Omega})\cap W^{1,p}(\Omega)$ on a bounded domain $\Omega\subset \mathbb{R}^{n}$. If
$$-\text{div}\big(\,|\nabla u |^{p-2} A(\,|\nabla u|\,) \nabla u\,\big)\,\leq \,-\text{div}\big(\,|\nabla v |^{p-2} A(\,|\nabla v|\,) \nabla v\,\big)$$
in the weak sense and $u\leq v \mbox{ in }\partial \Omega$, then $u\leq v$ in $\Omega\,$.
\label{ComparisonPrincipleForBounded}
\end{Teo}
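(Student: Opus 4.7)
The plan is to test the difference of the two weak formulations with the nonnegative function $w := (u-v)^+$ and combine with the monotonicity of the vector field $\mathbf{A}(\xi) := |\xi|^{p-2}A(|\xi|)\xi$ noted just before the statement.

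As a preliminary check, setting $\varphi(t) = t^{p-1}A(t)$, one has by Cauchy-Schwarz
\[ (\mathbf{A}(\xi)-\mathbf{A}(\eta))\cdot(\xi-\eta) \;\ge\; (\varphi(|\xi|)-\varphi(|\eta|))(|\xi|-|\eta|), \]
which is strictly positive when $|\xi|\neq|\eta|$ by assumption (iii) of \eqref{A_conditions2}; and when $|\xi|=|\eta|=r$ with $\xi\neq\eta$, the left-hand side reduces to $r^{p-2}A(r)\,|\xi-\eta|^2>0$. Thus the monotonicity stated in the text is a genuine consequence of the hypotheses \eqref{A_conditions2}.

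The main argument goes as follows. Since $u,v \in C(\overline{\Omega})\cap W^{1,p}(\Omega)$ and $u\le v$ on $\partial\Omega$, the function $w = (u-v)^+$ lies in $W^{1,p}(\Omega)$, is continuous on $\overline{\Omega}$, and vanishes on $\partial\Omega$. The subtle point -- the one I expect to be the main technical obstacle -- is establishing $w \in W^{1,p}_0(\Omega)$ despite the fact that no regularity is assumed on $\partial\Omega$. I would handle this by approximating $w$ with the truncations $w_k := (w - 1/k)^+$, each of which has support compactly contained in the open set $\{w > 1/k\}\subset\Omega$ (continuity of $w$ together with $w|_{\partial\Omega}=0$ supplies the needed compactness of these superlevel sets inside $\Omega$); mollifying each $w_k$ then produces a sequence in $C_c^\infty(\Omega)$ converging to $w$ in the $W^{1,p}$-norm.

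Once $w\in W^{1,p}_0(\Omega)$, density makes it admissible as a test function in the weak formulations for $u$ and $v$. Subtracting, the right-hand sides coincide (both equal $\int_\Omega f\,w$ if one has an equation, or in the inequality case produce $\le 0$), yielding
\[ \int_{\Omega}\bigl(\mathbf{A}(\nabla u)-\mathbf{A}(\nabla v)\bigr)\cdot\nabla w\,dx \;\le\; 0. \]
Since $\nabla w = (\nabla u - \nabla v)\,\chi_{\{u>v\}}$ almost everywhere, the integral above equals
\[ \int_{\{u>v\}}\bigl(\mathbf{A}(\nabla u)-\mathbf{A}(\nabla v)\bigr)\cdot(\nabla u - \nabla v)\,dx, \]
whose integrand is pointwise $\ge 0$ by the monotonicity just verified. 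Both inequalities together force the integrand to vanish a.e., so $\nabla u = \nabla v$ a.e.\ on $\{u>v\}$; equivalently $\nabla w = 0$ a.e.\ in $\Omega$. The Poincar\'e inequality in $W^{1,p}_0(\Omega)$ then gives $w\equiv 0$, i.e.\ $u\le v$ in $\Omega$.
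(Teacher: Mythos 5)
Your proof is correct. Note that the paper does not actually prove this statement: it is quoted as a particular case of Theorem 2.4.1 of Pucci--Serrin, \emph{The Maximum Principle}. The argument you give --- testing the difference of the weak formulations with $(u-v)^+$, reducing the monotonicity of $\mathbf{A}$ to the strict monotonicity of $\varphi(t)=t^{p-1}A(t)$ via Cauchy--Schwarz, and handling the membership $(u-v)^+\in W^{1,p}_0(\Omega)$ by the truncations $(w-1/k)^+$ so that no boundary regularity is needed --- is essentially the standard proof underlying the cited result, and all the steps (including the continuity of $\eta\mapsto\int\mathbf{A}(\nabla u)\cdot\nabla\eta$ on $W^{1,p}_0$ needed for the density argument, which follows from $|\mathbf{A}(\nabla u)|\le L|\nabla u|^{p-1}\in L^{p'}$) check out.
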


\subsection{Radially symmetric solutions for $p > n$}

\par  Next we establish some results on existence and estimates for barriers to the problem \eqref{nonhomogeneousPr} in order to prove Theorem \ref{TeorPrinc1} and part of Theorem \ref{TeorPrinc}.
\begin{Lem}\label{lem1}
Suppose that $A$ satisfies \eqref{A_conditions2} and $f\in L^{\infty}(B_{R}(x_0))$, where $B_R(x_0)$ is the open ball of radius $R>0$ centered at $x_0 \in \mathbb{R}^n$. Then, for $p>n$, there exists a family of radially symmetric supersolutions $v_{a,x_0}$ of \eqref{nonhomogeneousPr} in $B_{R}(x_0)\backslash\{x_{0}\}$
such that
\begin{equation}\label{lem1_est}
\bigg(\frac{1}{L}\bigg)^{\frac{1}{p-1}}a\frac{|x-x_0|^{\alpha}}{\alpha} \le v_{a,x_0}(x) \le  \bigg(\frac{1}{\delta}\bigg)^{\frac{1}{p-1}}\left(a+ \left( \frac{R^n \|f\|_{\infty}}{n}\right)^{\frac{1}{p-1}} \right)\frac{|x-x_0|^{\alpha}}{\alpha},
\end{equation}
for $a \ge \displaystyle  0$, where $\alpha = \frac{p-n}{p-1}$.
\label{lema1}
\end{Lem}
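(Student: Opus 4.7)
I look for a radial supersolution of the form $v_{a,x_0}(x) = w_a(|x-x_0|)$ with $w_a \colon [0,R]\to\mathbb{R}$ nondecreasing and $w_a(0)=0$. For such a radial $C^1$ function with $w_a'\ge 0$, a standard computation in polar coordinates yields
\begin{equation*}
-\mathrm{div}\bigl(\,|\nabla v_{a,x_0}|^{p-2}A(|\nabla v_{a,x_0}|)\nabla v_{a,x_0}\,\bigr)\;=\;-\frac{1}{r^{n-1}}\frac{d}{dr}\Bigl(r^{n-1}\varphi(w_a'(r))\Bigr),
\end{equation*}
where $\varphi(t)=t^{p-1}A(t)$. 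By \eqref{A_conditions2}(iii), $\varphi$ is continuous and strictly increasing on $[0,\infty)$, with $\varphi(0)=0$ and $\varphi(t)\to\infty$, so $\varphi^{-1}$ is well defined and continuous. The bounds in \eqref{A_conditions2}(ii) give $\delta\,t^{p-1}\le \varphi(t)\le L\,t^{p-1}$.

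The idea is to reverse-engineer $w_a$ so that the radial operator equals $\|f\|_\infty$. I define $w_a$ implicitly on $(0,R]$ by
\begin{equation*}
r^{n-1}\varphi(w_a'(r)) \;=\; a^{p-1}+\frac{(R^n-r^n)\,\|f\|_\infty}{n}\;=:\;G(r),
\end{equation*}
so that $w_a'(r)=\varphi^{-1}(G(r)/r^{n-1})\ge 0$ is continuous on $(0,R]$; since $G$ is a smooth function with $G'(r)=-r^{n-1}\|f\|_\infty$, one reads off
\begin{equation*}
-\mathrm{div}\bigl(\,|\nabla v_{a,x_0}|^{p-2}A(|\nabla v_{a,x_0}|)\nabla v_{a,x_0}\,\bigr) \;=\; -\frac{G'(r)}{r^{n-1}} \;=\; \|f\|_\infty \;\ge\; f
\end{equation*}
pointwise in $B_R(x_0)\setminus\{x_0\}$. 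Since $v_{a,x_0}\in C^1(B_R(x_0)\setminus\{x_0\})$, this classical inequality yields a weak supersolution after integration against any nonnegative test function compactly supported in $B_R(x_0)\setminus\{x_0\}$.

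For the pointwise bounds \eqref{lem1_est}, the monotone bounds on $\varphi$ translate into
\begin{equation*}
\Bigl(\tfrac{G(r)}{L\,r^{n-1}}\Bigr)^{\!1/(p-1)} \;\le\; w_a'(r) \;\le\; \Bigl(\tfrac{G(r)}{\delta\,r^{n-1}}\Bigr)^{\!1/(p-1)}.
\end{equation*}
Using $a^{p-1}\le G(r)\le a^{p-1}+R^n\|f\|_\infty/n$, together with the subadditivity $(A+B)^{1/(p-1)}\le A^{1/(p-1)}+B^{1/(p-1)}$ (valid since $p>n\ge 2$ gives $1/(p-1)\in(0,1]$), and integrating from $0$ to $r$ with $w_a(0)=0$, both estimates reduce to computing $\int_0^r s^{-(n-1)/(p-1)}\,ds=r^\alpha/\alpha$, with $\alpha=(p-n)/(p-1)$.

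The one genuinely sensitive point is the integrability of $w_a'$ near $r=0$: the exponent $(n-1)/(p-1)$ is strictly less than $1$ precisely when $p>n$, which is exactly the hypothesis of the lemma and the same reason $\alpha>0$. Apart from this, every step is an explicit ODE manipulation, and the supersolution identity follows directly from the spherical divergence formula.
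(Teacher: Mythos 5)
Your proposal is correct and follows essentially the same route as the paper: you integrate the radial ODE with the integrating factor $r^{n-1}$, choose the integration constant $C=a^{p-1}+R^n\|f\|_\infty/n$ (your $G(r)$ is exactly the paper's $C-\tfrac{\|f\|_\infty}{n}r^n$), invert $\varphi(t)=t^{p-1}A(t)$ using \eqref{A_conditions2}, and derive \eqref{lem1_est} from the two-sided bounds $(s/L)^{1/(p-1)}\le\varphi^{-1}(s)\le(s/\delta)^{1/(p-1)}$ together with subadditivity of $t\mapsto t^{1/(p-1)}$. Your explicit remark on the integrability of $w_a'$ near $r=0$ (requiring $p>n$) is a point the paper leaves implicit.
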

\begin{proof}
We start looking for radially symmetric solutions $v=v(r)$ of the equation
\begin{equation*}
-{\rm div}(|\nabla v |^{p-2} A(|\nabla v|) \nabla v) = \|f\|_{\infty} \, ,
\end{equation*}
where $r=|x-x_0| \in [0, R]$. This leads to the following ODE
$$ \frac{d}{dr}\left\{\,|v'|^{\,p-2}A\big(|v'|\big)v'\, \right\}+\frac{n-1}{r}\,|v'|^{\,p-2}A\big(|v'|\big)v' = -\|f\|_{\infty}\,.$$
Multiplying this equation by the integrating factor $r^{n-1}$ and performing an integration with respect to $r$, we get
\begin{equation}
\begin{split}
    |v'(r)|^{\,p-2}A\big(\,|v'(r)|\,\big)\,v'(r)\, r^{n-1}\,\, = C - \,\, \frac{\|f\|_{\infty}}{n}\,r^{n} \quad \text{for} \quad 0 < r \le R,
  \end{split} \label{radialSuperSolutionOfNonHomogeneous}
\end{equation}
where $C$ is some constant. Note that if f $v'(r) \ge 0$, this is equivalent to
\begin{equation}
\begin{split}
   v'(r)^{\,p-1}A(\,v'(r)\,)\, \, = \,\, \frac{1}{r^{n-1}}\left( C - \frac{\|f\|_{\infty}}{n}\,r^{n}\right).
  \end{split} \label{radialSuperSolutionOfNonHomogeneous2N}
\end{equation}
Let $\varphi(t):=t^{p-1}A(t)$ for $t \ge 0$. Condition \eqref{A_conditions2} implies that $\varphi$ has a nonnegative inverse $\varphi^{-1}$ defined in $[0,+\infty)$.
Moreover, from $ii)$ of \eqref{A_conditions2}, we can conclude that
\begin{equation}\label{phi}
 \Big(\,\frac{s}{L}\,\Big)^{\frac{1}{p-1}}\leq\varphi^{-1}(s)\leq \Big(\,\frac{s}{\delta}\,\Big)^{\frac{1}{p-1}}\,,\,\,\,\,\mbox{ for } s>0\,.
\end{equation}
Hence, for any $C\geq \frac{R^n \|f\|_{\infty}}{n} $, the function
\begin{equation}
 v(r) = \int_{0}^{r}\varphi^{-1}\left( \frac{1}{t^{n-1}}\left( C - \frac{\|f\|_{\infty}}{n}\,t^{n} \right) \right)dt\,,
\label{v-expression}
\end{equation}
is well defined and increasing in $[0,R]$.
Furthermore, it is a solution of \eqref{radialSuperSolutionOfNonHomogeneous2N} and, therefore, an increasing solution of \eqref{radialSuperSolutionOfNonHomogeneous}. Choosing
\begin{equation}
 C\,=\, \frac{R^n \|f\|_{\infty}}{n} +a^{\,p-1}\,,\,\,\,a\,\ge\, 0\,
\label{C_choice_0}
\end{equation}
we have
\begin{equation}
 v_a(r):= \int_{0}^{r}\varphi^{-1}\left( \,\frac{\|f\|_{\infty}}{n}\,(R^n-t^{n})\,t^{-n+1} + \,a^{\,p-1}t^{-n+1}\,\,\right)dt\,,\,\,\,a\geq 0\,.
\label{v-a-expression}
\end{equation}
Using \eqref{phi}, we have
\begin{equation*}
\begin{split}
    v_a(r) \,\geq\, &  \bigg(\,\frac{1}{L}\,\bigg)^{\frac{1}{p-1}}\int_{0}^{r}\,\left(\, \frac{\|f\|_{\infty}}{n}\,(R^n-t^{n}) + \,a^{\,p-1}\,\right)^{\frac{1}{p-1}}\,t^{-\frac{n-1}{p-1}}\,dt ,\\
  \end{split}
\end{equation*}
where noting that $t\leq R$, we obtain the lower bound
\begin{equation*}
    v_a(r) \,\geq \,  \left(\,\frac{1}{L}\,\right)^{\frac{1}{p-1}}\int_{0}^{r}\,a\, t^{-\frac{n-1}{p-1}}\,dt = \left(\,\frac{1}{L}\,\right)^{\frac{1}{p-1}}\,a\,\frac{r^{\alpha}}{\alpha}\,.
\end{equation*}
For the upper bound, using that $1/(p-1) < 1$, we can estimate
\begin{equation*}
\begin{split}
    v_a(r) \,\leq\, &  \left(\,\frac{1}{\delta}\,\right)^{\frac{1}{p-1}}\int_{0}^{r}\left( \frac{\|f\|_{\infty}}{n}(R^n-t^{n}) + a^{\,p-1} \right)^{\frac{1}{p-1}}\,t^{-\frac{n-1}{p-1}}\,dt \\
  \leq & \,\,  \left(\,\frac{1}{\delta}\,\right)^{\frac{1}{p-1}}\left( a+ \left(\frac{R^n \|f\|_{\infty}}{n}\right)^{\frac{1}{p-1}}\,\right) \,\frac{r^{\alpha}}{\alpha}\,.
  \end{split}
\end{equation*}
\end{proof}
\begin{Lem}\label{lemma2}
Suppose that $A$ satisfies \eqref{A_conditions2}, $p>n$, $f\in L^{\infty}(\mathbb{R}^n)$, and \eqref{f} holds. Then there exists a family of radially symmetric supersolutions $v_{a}$ of \eqref{nonhomogeneousPr} in $\mathbb{R}^n\backslash\{\,0\,\}$ satisfying
\\[5pt]$(a)$ $v_a(0)=0$ and $v_a(r)$ is nondecreasing in $(0,+\infty)$ for any $a\ge 0$;
\\$(b)$ $v_a$ is unbounded in $(0,+\infty)$ for $a > 0$; indeed, there exists a constant $c_0 =c_0(n,p,L)>0$ such that
 $$v_a(r)\, \ge\, c_0\,a \,r^{\alpha}  \quad {\rm for} \quad r \ge 0, \quad {\rm where} \quad \alpha=\frac{p-n}{p-1};$$
$(c)$ $v_0$ is bounded in $(0,+\infty)$; indeed, there exists a positive constant $C_0=C_0(n,p,\epsilon,C_f,\delta)$  such that
 $$v_0(r) \le C_0 $$
$(d)$ $v_a(r) \to v_0(r)$ as $a \to 0$ for any $r \in (0,+\infty)$.\\
\label{radialsupersol}
\label{upAndLowEstForVa-new1}
\end{Lem}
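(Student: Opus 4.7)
The plan is to adapt the construction from Lemma~\ref{lema1}, replacing the constant $\|f\|_\infty$ by a radial majorant of $f$ and extending the integration to the half-line $(0,\infty)$. By \eqref{f} there exists $R_0>0$ such that $|f(x)|\le C_f|x|^{-(p+\epsilon)}$ on $\{|x|\ge R_0\}$; I would set
\[
g(r):=\|f\|_\infty\text{ for }0\le r<R_0,\qquad g(r):=C_f\,r^{-(p+\epsilon)}\text{ for }r\ge R_0,
\]
so that $|f(x)|\le g(|x|)$ on $\mathbb{R}^n$, and observe that $M:=\int_0^\infty s^{n-1}g(s)\,ds$ is finite because $p+\epsilon>n$. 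Then, for $a\ge 0$, define
\[
v_a(r):=\int_0^r \varphi^{-1}\!\left(\frac{1}{t^{n-1}}\Big[a^{p-1}+\int_t^\infty s^{n-1}g(s)\,ds\Big]\right)dt,
\]
where $\varphi(t)=t^{p-1}A(t)$ is invertible on $[0,\infty)$ by \eqref{A_conditions2}. Since $r^{n-1}\varphi(v_a'(r))=a^{p-1}+\int_r^\infty s^{n-1}g(s)\,ds$, differentiating exactly as in Lemma~\ref{lema1} yields $-\mathrm{div}(|\nabla v_a|^{p-2}A(|\nabla v_a|)\nabla v_a)=g(|x|)\ge f(x)$ in $\mathbb{R}^n\setminus\{0\}$, so each $v_a$ is the desired supersolution there.

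By \eqref{phi}, the integrand is dominated by $((a^{p-1}+M)/\delta)^{1/(p-1)}\,t^{-(n-1)/(p-1)}$, which is integrable near $t=0$ because $p>n$ gives $(n-1)/(p-1)<1$; hence $v_a\in C([0,\infty))$, $v_a(0)=0$, and $v_a'>0$, proving (a). For (b), discarding the $\int_t^\infty$ contribution and using the lower bound of \eqref{phi} gives
\[
v_a'(r)\ge L^{-1/(p-1)}\,a\,r^{-(n-1)/(p-1)},
\]
and integration yields $v_a(r)\ge c_0\,a\,r^\alpha$ with $c_0=(\alpha L^{1/(p-1)})^{-1}$. For (c), with $a=0$, the upper bound in \eqref{phi} together with the explicit computation $\int_r^\infty s^{n-1}g(s)\,ds=\frac{C_f}{p+\epsilon-n}\,r^{n-p-\epsilon}$ for $r\ge R_0$ gives $v_0'(r)\le C\,r^{-1-\epsilon/(p-1)}$, integrable at infinity precisely because $\epsilon>0$; combined with $v_0'(r)\le C'\,r^{-(n-1)/(p-1)}$ on $(0,R_0)$ (from $M<\infty$), one obtains the uniform bound $v_0\le C_0$. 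Finally, (d) follows from dominated convergence applied to the defining integral, using $v_a'\le v_1'$ on $(0,r]$ for $0\le a\le 1$ as an integrable dominant.

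The main conceptual point is the choice of the integration constant: it must be small enough that $r^{n-1}\varphi(v_a'(r))$ has a finite limit as $r\to\infty$ (so that $v_0$ can be bounded, which is what forces the strict decay $\epsilon>0$), while the hypothesis $p>n$ enters twice---it makes $t^{-(n-1)/(p-1)}$ integrable at the origin so that $v_a$ extends continuously through $r=0$, and it ensures $s^{n-1}g(s)$ is integrable at infinity. Once the construction is set up so that both ends are simultaneously controlled, the remaining estimates are routine applications of the two-sided bound \eqref{phi} on $\varphi^{-1}$.
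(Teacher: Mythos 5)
Your construction is correct and is essentially the paper's own: the same radial ODE, the same integration constant $a^{p-1}+\int_r^\infty s^{n-1}g(s)\,ds$ (the paper just evaluates the tail integral explicitly as $\tfrac{C_f}{p-n+\epsilon}$ at $r=1$), and the same two-sided use of \eqref{phi} for items (a)--(c), with (d) justified slightly more carefully via a dominating function. The only cosmetic difference is your cut-off at $R_0$ with majorant $\|f\|_\infty$ instead of the paper's normalization $R_0=1$, $|f|\le C_f$, which makes your $C_0$ nominally depend also on $\|f\|_\infty$ and $R_0$; this is harmless.
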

\begin{proof}
With no loss of generality, we can assume \eqref{f} holds for all $|x|\ge1$, with $|f| \le C_f$.
Hence, to obtain the desired supersolution we consider
\begin{equation}
g(r)\, =
\begin{cases}
\,\,\,\,\,\,C_f\,,\,&\,\,\text{for}\,\, r\le1\\
\,C_f\,r^{-p-\epsilon}\,,\,&\,\,\text{for}\,\, r\ge1
\end{cases}
\end{equation}
and look for radially symmetric solutions $v=v(r)$, $r=|x|$, of
\begin{equation}\label{eqf-new2}
-{\rm div}(|\nabla v |^{p-2} A(|\nabla v|) \nabla v)\, =\, g(r)\,,
\end{equation}
for $r>0$.  This leads to the ODE
\begin{equation} \label{eqf-new2-ODEversion}
    \frac{d}{dr}\left\{ \,|v'|^{\,p-2}A\big(|v'|\big)v'\,r^{n-1}\,\right\} = -g(r)\,r^{n-1}\,,
\end{equation}
and, therefore,
\begin{equation*}
\begin{split}
    |v'(r)|^{\,p-2}A\big(\,|v'(r)|\,\big)\,v'(r)\, r^{n-1}\,\, = \,\, -\int_{1}^{r}\,g(s)\,s^{n-1}\,ds \,+\,C
  \end{split}
\end{equation*}
where $C\,=\,|v'(1)|^{\,p-2}A\big(\,|v'(1)|\,\big)\,v'(1)$.
If $v'\geq0$, it follows that
\begin{equation*}
\begin{split}
   v'(r)^{\,p-1}A(\,v'(r)\,)\, r^{n-1}\,\, = \,\,-\int_{1}^{r}\,g(s)\,s^{n-1}\,ds \,+\,C\,.
  \end{split}
\end{equation*}
Note that $g(s)\,s^{n-1} \in L^1(0,+\infty)$. Then, for $C \ge \int_{1}^{+\infty}\,g(s)\,s^{n-1}\,ds$, we have
$$v'(r)= \varphi^{-1}\left( \,\frac{-\int_{1}^{r}\,g(s)\,s^{n-1}\,ds +C}{r^{n-1}}\,\,\right) \quad \text{for} \quad r > 0,$$
where $\varphi(t)=t^{p-1}A(t)$, since $\varphi^{-1}$ is defined in $[0,+\infty)$ as observed in the previous lemma. From this and \eqref{phi},
\begin{equation}
 v(r) = \int_{0}^{r}\varphi^{-1}\left( \,\frac{-\int_{1}^{\tau}\,g(s)\,s^{n-1}\,ds +C}{\tau^{n-1}}\,\,\right)d\tau\,\,
\label{v-a-expression-new2}
\end{equation}
is well defined and increasing for $r > 0$. It is also a solution of \eqref{eqf-new2-ODEversion}. Recalling the definition of $g$, we have for $0 \le r \le 1$
$$ v(r) \, = \,\int_{0}^{r}\varphi^{-1}\bigg( \,\frac{ \frac{C_f}{n}\,(\,1-\tau^n\,)+C}{\tau^{n-1}}\,\,\bigg)\,d\tau  $$
and, for $r\ge1$,
\begin{equation}
\begin{split}
 v(r) \, = &\,\int_{0}^{1}\varphi^{-1}\bigg( \,\frac{ \frac{C_f}{n}\,(\,1-\tau^n\,)+C}{\tau^{n-1}}\,\,\bigg)\,d\tau\,\,\,\,+\\
 &\,\,+\,\,\, \int_{1}^{r}\varphi^{-1}\bigg( \,\frac{\frac{C_f}{p-n+\epsilon}\,(\,\tau^{\,n-p-\epsilon}-1\,) +C}{\tau^{n-1}}\,\,\bigg)\,d\tau \, .
 \end{split}
\label{v-a-expression2}
\end{equation}
We prove this lemma only for $r \ge 1$, since for $0 \le r \le 1$ the argument is simpler.
Then, choosing
\begin{equation}\label{C_choice}
C = \frac{C_f}{p-n+\epsilon} + a^{\,p-1} \,,\,\,\,a\,\ge\, 0\,,
\end{equation}
it follows that
\begin{equation}
\begin{split}
 v_a(r) \,= &\,\int_{0}^{1}\varphi^{-1}\bigg( \,\frac{ \frac{C_f}{n}\,(\,1-\tau^n\,)+\frac{C_f}{p-n+\epsilon} + a^{\,p-1}}{\tau^{n-1}}\,\,\bigg)\,d\tau\,\,\,\,+\\
 &\,\,+\,\,\, \int_{1}^{r}\varphi^{-1}\bigg( \frac{\frac{C_f}{p-n+\epsilon}\,\tau^{\,n-p-\epsilon} + a^{\,p-1}}{\tau^{n-1}} \,\,\,\bigg)\,d\tau\\
 \end{split}
\label{v-a-expression3}
\end{equation}
is a family of increasing solutions of \eqref{eqf-new2-ODEversion}. Using \eqref{phi} we can estimate $v_a$ from below:
\begin{equation*}
\begin{split}
    v_a(r) \,\geq\, & \,\bigg(\frac{1}{L}\bigg)^{\frac{1}{p-1}}\int_{0}^{1}\bigg(\,\frac{C_f}{n}\,(\,1-\tau^n\,)+\frac{C_f}{p-n+\epsilon}\,+ a^{\,p-1}\,\bigg)^{\frac{1}{p-1}}\,\tau^{\,-\frac{n-1}{p-1}}\,d\tau\\[5pt]
 &+ \bigg(\frac{1}{L}\bigg)^{\frac{1}{p-1}}\int_{1}^{r}\bigg(\,\frac{C_f}{(p-n+\epsilon)} \tau^{\,n-p-\epsilon} +a^{\,p-1} \,\bigg)^{\frac{1}{p-1}}\,\tau^{\,-\frac{n-1}{p-1}}\,d\tau\,\\[5pt]
 \geq\, & \,\bigg(\frac{1}{L}\bigg)^{\frac{1}{p-1}}\bigg(\, \int_{0}^{1} a \,\tau^{\,-\frac{n-1}{p-1}}\,d\tau\,+\,\int_{1}^{r} a\,\tau^{\,-\frac{n-1}{p-1}}\,d\tau\,\bigg) \geq \bigg(\frac{1}{L}\bigg)^{\frac{1}{p-1}}\,a\,\frac{r^{\,\alpha}}{\alpha}\, .
\end{split}
\end{equation*}
For the upper bound, we can estimate from \eqref{v-a-expression3} and \eqref{phi},
\begin{equation}\label{upper_bound}
\begin{split}
 v_0(r) \, = &\,\int_{0}^{1}\varphi^{-1}\bigg( \,\frac{ \frac{C_f}{n}\,(\,1-\tau^n\,)+\frac{C_f}{p-n+\epsilon}}{\tau^{n-1}}\,\,\bigg)\,d\tau\\[5pt]
 &\,\,+\,\,\, \int_{1}^{r}\varphi^{-1}\bigg( \,\frac{C_f}{p-n+\epsilon}\,\frac{\,\tau^{\,n-p-\epsilon} }{\tau^{n-1}}\,\,\bigg)\,d\tau\\[5pt]
  \le &\,\bigg(\frac{1}{\delta}\bigg)^{\,\frac{1}{p-1}}\,\int_{0}^{1}\,\bigg( \, \frac{C_f}{n}\,(\,1-\tau^n\,)+\frac{C_f}{p-n+\epsilon}\,\,\bigg)^{\,\frac{1}{p-1}}\,\tau^{-\frac{n-1}{p-1}}\,d\tau \\[5pt]
 &\,\,+\,\,\, \bigg(\frac{1}{\delta}\bigg)^{\,\frac{1}{p-1}}\,\int_{1}^{r}\bigg( \,\frac{C_f}{p-n+\epsilon}\,\,\bigg)^{\,\frac{1}{p-1}}\,\tau^{\,\frac{-p-\epsilon+1}{p-1}}\,d\tau\\[5pt]
 \le &\,\bigg(\frac{1}{\delta}\bigg)^{\,\frac{1}{p-1}}\,\int_{0}^{1}\,\bigg( \, \frac{C_f (p+\epsilon )}{n(p-n+\epsilon)} \bigg)^{\,\frac{1}{p-1}}\,\tau^{-\frac{n-1}{p-1}}\,d\tau \\[5pt]
 &\,\,+\,\,\, \bigg(\frac{1}{\delta}\bigg)^{\,\frac{1}{p-1}}\,\int_{1}^{r}\bigg( \,\frac{C_f}{p-n+\epsilon}\,\,\bigg)^{\,\frac{1}{p-1}}\,\tau^{\,\frac{-p-\epsilon+1}{p-1}}\,d\tau\\[5pt]
     \leq\,&\,\bigg(\frac{C_f\,(\,p+\epsilon\,)}{\delta \, n (\,p-n+\epsilon\,)}\bigg)^{\frac{1}{p-1}}\,\Big(\,\frac{1}{\alpha}+\frac{p-1}{\epsilon}\,\Big).
  \end{split}
\end{equation}
The item $(d)$ is a direct consequence of \eqref{v-a-expression3} and the continuity of $\varphi^{-1}$.
\end{proof}

\subsection{Harnack inequality}

In the proof of Theorem 2, beyond the use of barriers, we have to apply a Harnack inequality and a symmetrization result. The following theorem about Harnack inequality is established in Theorems 5, 6 and 9 of \cite{S0}:

\begin{Teo}\label{HarnakIneq}
 Let $u$ be a nonnegative weak solution of \eqref{nonhomogeneousPr} on an open ball $B_R$. For $p\le n$, assume that $f\in L^{\frac{n}{p-\theta}}(B_R)$ for some $\theta\in(0,1)$. For $p> n$, assume that $f\in L^{1}(B_R)$.  Then, for any $\sigma\in (0,1)$,
\begin{equation}
\underset{B_{\,\sigma R}}{\,\sup\,}\,u\,\le\,C\,\Big(\,\underset{B_{\,\sigma R}}{\,\inf\,}\,u + K(R)\,\Big)
\end{equation}
where $C$ depends on $n,p,\sigma,\delta,L$ and, in the case $p\le n$, also on $\theta$. Moreover
\begin{equation}\label{HarnacK}
K(R)\,=\,\Big(\,R^{\,\theta}\,\|\,f\,\|_{L^{\,\frac{n}{p-\theta}}(B_R)}\,\Big)^{\,\frac{1}{p-1}}
\end{equation}
if $p\le n$, and
\begin{equation}\label{HarnacK2}
K(R)\,=\,\Big(\,R^{\,p-n}\,\|\,f\,\|_{L^{\,1}(B_R)}\,\Big)^{\,\frac{1}{p-1}}
\end{equation}
if $p>n$.
\end{Teo}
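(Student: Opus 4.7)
The Harnack inequality for quasilinear equations of this type is obtained by the classical Serrin--Moser scheme: combine a local $L^\infty$ bound for nonnegative subsolutions with a weak Harnack inequality for nonnegative supersolutions, and chain the two using that a solution is simultaneously a subsolution and a supersolution. To absorb the forcing, I would work with the shifted function $w:=u+K(R)$, so that $w\ge K(R)>0$ everywhere; the point of the specific definition of $K(R)$ is that, after H\"older's inequality, the contribution of $f$ becomes comparable to the principal $p$-Laplacian-like part whenever it is integrated against a test function depending on $w$.

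\textbf{Main steps.} Fix a smooth cutoff $\eta$ supported in $B_R$ with $\eta\equiv1$ on $B_{\sigma R}$, and test the weak formulation of \eqref{main} against $\eta^p w^\beta$ for a real $\beta\ne 1-p$. The bounds $\delta\le A\le L$ in \eqref{A_conditions2} together with Young's inequality imply that the principal part yields a term of order $\int \eta^p w^{\beta-1}|\nabla w|^p\,dx$ on the left, while the right-hand side produces $\int|\nabla\eta|^p w^{\beta+p-1}\,dx$ together with $\int |f|\eta^p w^{\beta}\,dx$. Estimating the last integral by H\"older's inequality with exponent $n/(p-\theta)$ (respectively $1$, when $p>n$) makes the factor $R^{\theta}\|f\|_{L^{n/(p-\theta)}(B_R)}=K(R)^{p-1}$ (resp.\ $R^{p-n}\|f\|_{L^1}$) appear multiplied by a power of $w$, and the inequality $w\ge K(R)$ lets it be absorbed into the other terms. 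Combined with Sobolev's embedding for $p<n$, and with Morrey/Trudinger for $p\ge n$, this yields a reverse-H\"older improvement of the form $\|w\|_{L^{\chi q}(B_{r'})}\le[CR^{-p}(r-r')^{-p}]^{1/q}\|w\|_{L^q(B_r)}$ with $\chi=n/(n-p)$ (modified in the critical/supercritical cases). Moser iteration in positive exponents gives $\sup_{B_{\sigma R}}w\le C\|w\|_{L^{q_0}(B_R)}$ for any $q_0>0$, and the analogous iteration in negative exponents gives $\inf_{B_{\sigma R}}w\ge c\|w^{-1}\|_{L^{q_0}(B_R)}^{-1}$.

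\textbf{Closing the gap and main obstacle.} To bridge positive and negative exponents one must handle the excluded case $\beta=1-p$: testing against $\eta^p w^{1-p}$ produces a BMO-type estimate for $\log w$, from which the John--Nirenberg lemma (or a Bombieri--Giusti iteration) gives $\|w^{q_0}\|_{L^1(B_R)}\|w^{-q_0}\|_{L^1(B_R)}\le C|B_R|^2$ for some small $q_0>0$. Concatenating with the two Moser chains yields $\sup_{B_{\sigma R}}w\le C\inf_{B_{\sigma R}}w$, which in terms of $u$ reads $\sup u\le C(\inf u+K(R))$. The delicate bookkeeping lies in calibrating the H\"older and Sobolev exponents so that the $f$-contribution in the Caccioppoli estimate has exactly the same scaling in $w$ and in $R$ as the gradient term: this is precisely what forces the explicit form of $K(R)$ and explains both why $\theta\in(0,1)$ is required in the case $p\le n$ (so that Sobolev and H\"older interact correctly) and why the weaker norm $\|f\|_{L^1}$ weighted by $R^{p-n}$ suffices in the case $p>n$ via Morrey's embedding.
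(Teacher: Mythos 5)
The paper does not prove this theorem: it is quoted verbatim as a known result, established in Theorems 5, 6 and 9 of Serrin's \emph{Local behavior of solutions of quasilinear elliptic equations} (reference [S0]), and your outline is precisely the Serrin--Moser scheme used there — shifting to $w=u+K(R)$ so the forcing can be absorbed into the structural constants, running the positive- and negative-exponent Moser chains, and bridging them with the logarithmic estimate and John--Nirenberg. So in substance you are reproducing the same approach as the (cited) proof; the ``delicate bookkeeping'' you defer, in particular writing $|f|w^{\beta}=\bigl(|f|/K(R)^{p-1}\bigr)w^{\beta+p-1}$ and checking that $R^{\theta}\|f\|_{L^{n/(p-\theta)}}/K(R)^{p-1}$ (resp. $R^{p-n}\|f\|_{L^{1}}/K(R)^{p-1}$ for $p>n$) is scale-invariant, is exactly the content of Serrin's argument and is consistent with the form of $K(R)$ stated here.
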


The result above can be extended with no difficulty to arbitrary compact subsets. We can extract the corollaries below which give the Harnack inequality for solutions on exterior domains over the spheres $S_R$, for all $R$ large, with $C$ independent of $R$.
\begin{Cor}\label{SRHarnack1}
Let $u$ be a non-negative weak solution of \eqref{nonhomogeneousPr} on $\mathbb{R}^n\setminus\overline{B_1}$ and assume $f$ satisfy condition $\eqref{f}$. Then, for all $R\ge4$,
\begin{equation}\label{harnackSR1}
\underset{S_{\,R}}{\,\sup\,}\,u\,\le\,C\,\Big(\,\underset{S_{R}}{\,\inf\,}\,u + R^{\,-\frac{\epsilon}{p-1}}\,\Big)
\end{equation}
where $C$ depends only on $n,p,\delta,L$.
\begin{proof}
We can cover $S_{R}$ with $N$ balls $B_i=B_{R/2}(x_i)$ whose centers $x_i$ lie on $S_{R}$ and $N$ does not depend on $R$. Ordering these balls so that $B_i\cap B_{i+1}\neq\varnothing$, we have
\begin{equation}\label{supinf1}
\underset{B_{i}}{\,\inf\,}\,u\,\le\,\underset{B_{i+1}}{\,\sup\,}\,u\,.
\end{equation}
Now we apply the previous theorem on each ball $B_{3R/4}(x_i)\subset\mathbb{R}^n\setminus \overline{B_1}$, with $\sigma=2/3$. Using $\eqref{f}$, a computation of the norms of $f$ shows that, for any case, $K$ can be estimated as
$$ K(3R/4)\,\le\,C\,R^{\,-\frac{\epsilon}{p-1}}$$
for some constant $C$ depending only on $n,p$, so we have by the theorem
\begin{equation}\label{harnackapplied1}
\underset{B_i}{\,\sup\,}\,u\,\le\,C\,\Big(\,\underset{B_i}{\,\inf\,}\,u + R^{\,-\frac{\epsilon}{p-1}}\,\Big)
\end{equation}
where $C$ depends only on $n,p, L$ and, in case $p\le n$, of a chosen $\theta\in(0,1)$.
Then by combining inequalities \eqref{supinf1} and \eqref{harnackapplied1} it follows, for all $i,j\in\{\,1,\ldots,N\,\}$,
$$
\underset{B_i}{\,\sup\,}\,u\,\le\,C\,\Big(\,\underset{B_j}{\,\inf\,}\,u + R^{\,-\frac{\epsilon}{p-1}}\,\Big)
$$
after a proper redefinition of $C$ depending only on $N$. This leads to \eqref{harnackSR}.
\end{proof}

\end{Cor}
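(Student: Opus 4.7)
The strategy is to derive the sphere-Harnack inequality on $S_R$ from the ball-Harnack inequality of Theorem \ref{HarnakIneq} via a chaining argument over a finite cover of $S_R$ whose cardinality is controlled independently of $R$.

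First, I would cover $S_R$ by balls $B_i = B_{R/2}(x_i)$ with centers $x_i \in S_R$, for $i = 1,\dots,N$. Rescaling a fixed covering of the unit sphere $S_1$ shows that $N$ can be chosen depending only on $n$. I would order the $B_i$ so that consecutive balls overlap, $B_i\cap B_{i+1}\neq\varnothing$, which is possible since $S_R$ is connected.

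Next, for $R \ge 4$ the enlarged balls $B_{3R/4}(x_i)$ still lie in $\mathbb{R}^n\setminus\overline{B_1}$ (since $R - 3R/4 = R/4 \ge 1$), so Theorem \ref{HarnakIneq} applies on each with $\sigma = 2/3$, noting $B_i = B_{\sigma\cdot 3R/4}(x_i)$. This yields
\[
\sup_{B_i} u \le C\bigl(\inf_{B_i} u + K(3R/4)\bigr),
\]
with $C$ depending only on $n,p,\delta,L$ (and a chosen $\theta\in(0,1)$ if $p\le n$). The crucial computation is the estimate on $K$: since $|x|\ge R/4$ on $B_{3R/4}(x_i)$, the decay \eqref{f} gives $|f(x)| \le C_f(R/4)^{-(p+\epsilon)}$ there, and a direct computation—using \eqref{HarnacK} when $p\le n$ and \eqref{HarnacK2} when $p>n$—produces in both regimes
\[
K(3R/4) \le C\, R^{-\epsilon/(p-1)},
\]
because the scaling factors $R^\theta$ or $R^{p-n}$ from the definition of $K$ combine with the volume factor $R^n$ and the pointwise bound $R^{-(p+\epsilon)}$ to give exactly the same exponent.

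Finally, the overlap $B_i\cap B_{i+1}\neq\varnothing$ yields $\inf_{B_i} u \le \sup_{B_{i+1}} u$, so iterating the local Harnack inequality at most $N-1$ times gives
\[
\sup_{B_i} u \le C'\bigl(\inf_{B_j} u + R^{-\epsilon/(p-1)}\bigr)
\]
for any $i,j$, with $C'$ depending on $C$ and $N$, hence only on $n,p,\delta,L$. Taking the supremum over $i$ and infimum over $j$, and using that $\{B_i\}$ covers $S_R$, yields the claimed estimate. The only nontrivial step is the estimate on $K(3R/4)$, where one must check that the two separate cases of Theorem \ref{HarnakIneq} yield the same decay rate $R^{-\epsilon/(p-1)}$; the rest is a standard finite-chaining procedure.
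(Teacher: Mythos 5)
Your proof is correct and follows essentially the same route as the paper: cover $S_R$ by $N$ balls $B_{R/2}(x_i)$ with centers on $S_R$, apply Theorem \ref{HarnakIneq} on the enlarged balls $B_{3R/4}(x_i)$ with $\sigma = 2/3$, verify $K(3R/4) \le C R^{-\epsilon/(p-1)}$ in both regimes of $K$, and chain through the overlapping balls. You in fact supply two useful details the paper leaves implicit — the check that $B_{3R/4}(x_i) \subset \mathbb{R}^n\setminus\overline{B_1}$ once $R\ge 4$, and the explicit cancellation of the scaling and volume factors in the $K$ estimate.
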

\begin{Cor}\label{SRHarnack}
Let $u$ be a non-negative weak solution of \eqref{nonhomogeneousPr} on $\mathbb{R}^n\backslash B_1$. Assume that $p\le n$ and that there is some $\theta\in(0,1)$ such that $f\in L^{\frac{n}{p-\theta}}$. Then, for all $R$ sufficiently large,
\begin{equation}\label{harnackSR}
\underset{S_{\,R}}{\,\sup\,}u \le C\,\Big(\,\underset{S_{R}}{\,\inf\,}u + K(R)\Big),
\end{equation}
where
\begin{equation}\label{KR}
K(R)\,=\,\Big(\big(R/4\big)^{\,\theta}\,\|\,f\,\|_{L^{\frac{n}{p-\theta}}(\,\mathbb{R}^n\backslash B_{R/4}(0)\,)}\Big)^{\,\frac{1}{p-1}}
\end{equation}
and $C$ depends on $n,p,\theta,\delta, \Gamma$.
\end{Cor}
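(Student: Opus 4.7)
The proof should proceed by mimicking Corollary \ref{SRHarnack1} step for step, with the local Harnack inequality of Theorem \ref{HarnakIneq} applied on appropriately sized balls covering $S_R$, and with the $L^{n/(p-\theta)}$ norm of $f$ replacing the pointwise decay estimate used there.

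For each $R$ large enough that $R/4 > 1$, I cover $S_R$ by a finite family of balls $B_i = B_{R/2}(x_i)$, $i=1,\ldots,N$, with $x_i \in S_R$ and $N$ depending only on the dimension $n$ (by scaling a fixed cover of the unit sphere by balls of radius $1/2$). After relabeling, consecutive balls may be assumed to overlap, so that
\[
\inf_{B_i} u \,\le\, \sup_{B_{i+1}} u \qquad (i=1,\ldots,N-1).
\]
Since $R/4 > 1$, the enlarged ball $B_{3R/4}(x_i)$ is contained in $\mathbb{R}^n \setminus \overline{B_1}$, as any point $y$ of this ball satisfies $|y| \ge |x_i| - 3R/4 = R/4 > 1$; for the same reason it is contained in $\mathbb{R}^n \setminus B_{R/4}(0)$.

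Next, I apply Theorem \ref{HarnakIneq} on each $B_{3R/4}(x_i)$ with $\sigma = 2/3$, which makes the ball in the conclusion equal to $B_i$. This yields
\[
\sup_{B_i} u \,\le\, C\Bigl(\inf_{B_i} u + K_i\Bigr), \quad K_i = \Bigl((3R/4)^{\theta}\,\|f\|_{L^{\,n/(p-\theta)}(B_{3R/4}(x_i))}\Bigr)^{1/(p-1)},
\]
with $C$ depending only on $n,p,\theta,\delta,L$. Using the inclusion $B_{3R/4}(x_i) \subset \mathbb{R}^n \setminus B_{R/4}(0)$ established above, I estimate
\[
K_i \,\le\, 3^{\theta/(p-1)}\,K(R),
\]
where $K(R)$ is the quantity defined in \eqref{KR}.

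Finally, I chain the inequalities $\inf_{B_i} u \le \sup_{B_{i+1}} u$ and the local Harnack estimate $\sup_{B_{i+1}} u \le C(\inf_{B_{i+1}} u + K(R))$ at most $N$ times. This produces, for any pair of indices $i,j$,
\[
\sup_{B_i} u \,\le\, C'\,\bigl(\inf_{B_j} u + K(R)\bigr),
\]
for a constant $C'$ depending on $n, p, \theta, \delta, L$ (absorbing $N$, which itself depends only on $n$). Taking the supremum over $i$ and the infimum over $j$ restricted to $S_R$ yields \eqref{harnackSR}. The only point that requires any genuine attention, and which is really routine, is checking that the geometric inclusion $B_{3R/4}(x_i) \subset \mathbb{R}^n \setminus B_{R/4}(0)$ lets us dominate $\|f\|_{L^{n/(p-\theta)}(B_{3R/4}(x_i))}$ by $\|f\|_{L^{n/(p-\theta)}(\mathbb{R}^n \setminus B_{R/4})}$; this is what makes $K_i$ controllable by $K(R)$ with a constant independent of $R$.
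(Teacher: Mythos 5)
Your proposal is correct and follows essentially the same argument as the paper: cover $S_R$ by $N$ overlapping balls $B_{R/2}(x_i)$ with centers on $S_R$, apply Theorem \ref{HarnakIneq} on $B_{3R/4}(x_i)$ with $\sigma=2/3$, dominate each local $K_i$ by $K(R)$ via the inclusion $B_{3R/4}(x_i)\subset\mathbb{R}^n\setminus B_{R/4}(0)$, and chain the estimates. The only (harmless) difference is your constant $3^{\theta/(p-1)}$ in place of the paper's $3^{1/(p-1)}$, which is in fact slightly sharper since $\theta\in(0,1)$.
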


\begin{proof}[Proof]
Defining the balls $B_i$ in the same way as we did in the previous corollary, we have
\begin{equation}\label{supinf}
\underset{B_{i}}{\,\inf\,}\,u\,\le\,\underset{B_{i+1}}{\,\sup\,}\,u.
\end{equation}
Now we apply Theorem \ref{HarnakIneq} for each ball $B_{3R/4}(x_i)\subset\mathbb{R}^n\backslash B_1$, with $\sigma=2/3$. We obtain
\begin{equation}\label{harnackapplied}
\underset{B_i}{\,\sup\,}u \le C\,\Big(\underset{B_i}{\,\inf\,}\,u + K(R)\Big)
\end{equation}
with
\begin{equation*}
\begin{split}
K(R)\,\,=&\,\,\Big(\big(3R/4\big)^{\,\theta}\,\|\,f\,\|_{L^{\frac{n}{p-\theta}}(\,B_{3R/4}(x_i)\,)}\Big)^{\,\frac{1}{p-1}}\\
\le&\,\,3^{\,\frac{1}{p-1}}\,\Big(\big(R/4\big)^{\,\theta}\,\|\,f\,\|_{L^{\frac{n}{p-\theta}}(\,\mathbb{R}^n\backslash B_{R/4}(0)\,)}\Big)^{\,\frac{1}{p-1}}.
\end{split}
\end{equation*}
Then combining inequalities \eqref{supinf} and \eqref{harnackapplied} yields, for all $i,j\in\{1,\ldots,N\}$,
$$
\underset{B_i}{\,\sup\,}u\,\le\,C\,\Big(\underset{B_j}{\,\inf\,}\,u + K(R)\Big)
$$
after a proper redefinition of $C$ depending only on $N$. This leads to \eqref{harnackSR}, as it is clear we can choose $K$ above as in \eqref{KR}, redefining $C$ if necessary.
\end{proof}

\subsection{Schwarz symmetrization}

Now we recall some definitions and useful results about symmetrization. For an exhaustive treatment about this topics we refer to Hardy, Littlewood and P\'olya \cite{HLP}, Talenti \cite{T1}, Alvino, Lions and Trombetti \cite{ALT} and Brothers and Ziemer \cite{BZ}.

First, if $\Omega$ is an open bounded set in
$\mathbb{R}^n$ and $u: \Omega \to \mathbb{R}$ is a measurable
function, the distribution function of $u$ is given by
$$\mu_{u}(t)=|\{ x \in \Omega : |u(x)| > t \}|, \quad {\rm for } \; t \ge 0.$$
The decreasing rearrangement of $u$, also called the {\it generalized
inverse} of $\mu_u$, is defined by
$$u^{*}(s)= \sup \{ t \ge 0 : \mu_u (t) \ge s \}. $$
If $\Omega^{\sharp}$ is the open ball in $\mathbb{R}^n$, centered at
$0$, with the same measure as $\Omega$ and $\omega_n$ is the
measure of the unit ball in $\mathbb{R}^n$, the function
$$ u^{\sharp}(x) = u^*( \omega_n |x|^n), \quad {\rm for} \quad x \in \Omega^{\sharp} $$
is the spherically symmetric decreasing rearrangement of $u$. It is also
called the Schwarz symmetrization of $u$.

\begin{remark}{\rm
\label{obs2} Let $v,w$ be integrable functions in $\Omega$ and let
$g:\mathbb{R}\to\mathbb{R}$ be a non-decreasing non-negative function.
Then
$$ \int_{\Omega} g(|v(x)|) \; dx = \int_0^{|\Omega|} g(v^*(s)) \; ds
= \int_{\Omega^{\sharp}} g(v^{\sharp}(x)) \; dx .$$ Hence, if
$\mu_v(t) \ge \mu_w(t) $ for all $t>t_1>0$, it follows that
$$ \int_{t_1<v} g(v(x)) \; dx  =  \! \int_0^{\mu_u(t_1)} g( v^*(s)) \; ds \ge
\int_0^{\mu_w(t_1)} g( w^*(s)) \; ds = \! \int_{t_1<w} g(w(x)) \;
dx,
$$ since $v^*(s) \ge w^*(s)$
for $s \le \mu_w(t_1) $.
Finally, the P\'olya-Szeg\"o principle (see, for instance, Brothers and Ziemer \cite{BZ}) states that
$$\int_{\Omega} | Dv(x) |^2 \; dx  \; \ge \; \int_{\Omega^{\sharp}} | Dv^{\sharp} (x) |^2 \;
dx, \quad {\rm for} \; \; v \in H_0^1 (\Omega). $$
This inequality also holds if we replace $\Omega$ and $\Omega^{\sharp}$ by $\{t_1 < v < t_2 \}$
and $\{t_1 < v^{\sharp} < t_2 \}$, respectively.}
\end{remark}

The next theorem taken from Talenti's work \cite{Talenti} is a result that compares the solutions of some Dirichlet problems with the solutions of their symmetrized versions.  As references for works in this line of research, we can cite also Talenti \cite{T1}, Trombetti and Vasquez \cite{TV}, Kesavan \cite{kesavan}, and Bonorino and Montenegro \cite{BM}, among others.

\begin{Teo}\label{teorematalenti}
Consider a weak solution $u$ of \eqref{main} on a bounded domain $\Omega$, where $A$ satisfies \eqref{A_conditions2}, and assume $f\in L^1$. Then, $u^{\sharp}$, the spherically symmetric rearrangement of $u$, satisfies
\begin{equation}\label{Talenti}
u^{\sharp}(x)\,\le\,\underset{ \partial \Omega}{\,\sup\,}|u| \,+\,\int_{\omega_n|x|^{n}}^{|\Omega|}\, \varphi^{-1}\bigg(\frac{r^{-1+1/n}}{n\,\omega_n^{1/n}}\int_{0}^{r}f^{*}(s)\,ds\bigg)\,\frac{r^{-1+1/n}}{n\,\omega_n^{1/n}}\,dr,
\end{equation}
where $\varphi(t)=t^{\,p-1}A(t)$.
\end{Teo}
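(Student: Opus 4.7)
The plan is to follow Talenti's classical rearrangement technique: derive a pointwise differential inequality for the distribution function $\mu(t)=|\{u>t\}|$ from the weak formulation, then invert it to bound the decreasing rearrangement $u^{*}$.

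First I would reduce to the case $\sup_{\partial\Omega}|u|=0$ by replacing $u$ with $(u-M)_{+}$, where $M=\sup_{\partial\Omega}|u|$; this function still solves the equation weakly on $\Omega_{M}:=\{u>M\}$ and vanishes on its boundary, and the extra constant $M$ can be reinstated at the end since the integrand in \eqref{Talenti} is nonnegative. For a.e.\ $t>0$ and small $h>0$, use the Lipschitz truncation $\eta_{t,h}(x)=\min(\max(u(x)-t,0),h)$ as a test function in \eqref{weak}. Since $\nabla\eta_{t,h}=\nabla u\,\chi_{\{t<u<t+h\}}$, the left-hand side equals $\int_{\{t<u<t+h\}}|\nabla u|^{p}A(|\nabla u|)\,dx$. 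Dividing by $h$, letting $h\to 0$, and applying the coarea formula on the left together with dominated convergence on the right, one obtains the level-set energy identity
\[
\int_{\{u=t\}}\varphi(|\nabla u|)\,d\mathcal{H}^{n-1}=\int_{\{u>t\}}f\,dx\leq\int_{0}^{\mu(t)}f^{*}(\sigma)\,d\sigma=:F(t),
\]
the last bound coming from the Hardy--Littlewood rearrangement inequality.

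The heart of the proof combines this identity with the coarea formula $-\mu'(t)=\int_{\{u=t\}}|\nabla u|^{-1}\,d\mathcal{H}^{n-1}$ and the De Giorgi--Federer isoperimetric inequality $\mathcal{H}^{n-1}(\{u=t\})\geq \rho(t):=n\omega_{n}^{1/n}\mu(t)^{(n-1)/n}$. Introducing the probability measure $d\nu=d\mathcal{H}^{n-1}/\mathcal{H}^{n-1}(\{u=t\})$ on $\{u=t\}$, Jensen's inequality for the convex function $1/x$ gives $\int|\nabla u|\,d\nu\geq \mathcal{H}^{n-1}(\{u=t\})/(-\mu'(t))$, and a second Jensen application to $\varphi$ combined with its monotonicity then yields
\[
\rho(t)\,\varphi\!\Bigl(\frac{\rho(t)}{-\mu'(t)}\Bigr)\leq\mathcal{H}^{n-1}(\{u=t\})\,\varphi\!\Bigl(\frac{\mathcal{H}^{n-1}(\{u=t\})}{-\mu'(t)}\Bigr)\leq\int_{\{u=t\}}\varphi(|\nabla u|)\,d\mathcal{H}^{n-1}\leq F(t).
\]
Inverting the monotone $\varphi$ produces the pointwise differential inequality
\[
-\mu'(t)\geq\frac{\rho(t)}{\varphi^{-1}\bigl(F(t)/\rho(t)\bigr)}.
\]

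Finally, changing variables via $s=\mu(t)$, with $-u^{*\prime}(s)=1/(-\mu'(u^{*}(s)))$ and $\rho(u^{*}(s))=n\omega_{n}^{1/n}s^{(n-1)/n}$, the previous inequality rewrites as
\[
-u^{*\prime}(s)\leq\frac{s^{-1+1/n}}{n\omega_{n}^{1/n}}\,\varphi^{-1}\!\Bigl(\frac{s^{-1+1/n}}{n\omega_{n}^{1/n}}\int_{0}^{s}f^{*}(\sigma)\,d\sigma\Bigr).
\]
Integrating from $s=\omega_{n}|x|^{n}$ up to $|\Omega|$, using $u^{*}(|\Omega|)=0$ from the reduction, adding back the constant $M$, and recalling $u^{\sharp}(x)=u^{*}(\omega_{n}|x|^{n})$ then delivers \eqref{Talenti}. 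The main technical obstacle is the chain of Jensen-type and isoperimetric estimates that converts the integral bound $\int_{\{u=t\}}\varphi(|\nabla u|)\,d\mathcal{H}^{n-1}\leq F(t)$ into the pointwise expression $\varphi(\rho(t)/(-\mu'(t)))$, since this step is precisely where the structural assumptions \eqref{A_conditions2} on $\varphi$ are essential; some care is also needed at degenerate level sets where $\nabla u$ may vanish, handled via standard Sard-type arguments available in the Talenti--Alvino--Lions--Trombetti framework.
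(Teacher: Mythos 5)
You are not actually deviating from ``the paper's proof'' here, because the paper offers none: its proof is the single citation to Theorem 1 of \cite{Talenti}. Your reconstruction of Talenti's symmetrization argument is therefore the right strategy, and most of the skeleton --- truncated test functions $\eta_{t,h}$, coarea, the Hardy--Littlewood bound $\int_{\{u>t\}}f\le\int_0^{\mu(t)}f^*$, the isoperimetric inequality, and the inversion of the differential inequality for $\mu$ --- is correct and standard. One minor point first: $u^{\sharp}$ is the rearrangement of $|u|$, so the reduction must use the two-sided truncation $\operatorname{sgn}(u)\min\big((|u|-M-t)_+,h\big)$ rather than only $(u-M)_+$; otherwise you only control the superlevel sets of $u$, not of $|u|$.

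The substantive gap is the ``second Jensen application to $\varphi$''. The inequality $\varphi\big(\int g\,d\nu\big)\le\int\varphi(g)\,d\nu$ requires $\varphi$ to be \emph{convex}, and \eqref{A_conditions2} provides only continuity, strict monotonicity and the bounds $\delta t^{p-1}\le\varphi(t)\le Lt^{p-1}$ --- no convexity. (For $1<p<2$ even the model case $\varphi(t)=t^{p-1}$ is concave, so the step fails as written there although the conclusion is true.) Talenti's actual Jensen step is different and is worth recording: one applies Jensen to the Young function $B(\tau)=\tau\varphi(\tau)$ with respect to the measure $|\nabla u|^{-1}\,d\mathcal{H}^{n-1}$ normalized on $\{u=t\}$; the weight cancels exactly, since $\int B(g)\,g^{-1}\,d\mathcal{H}^{n-1}=\int\varphi(g)\,d\mathcal{H}^{n-1}\le F(t)$, and writing $I=\int_{\{u=t\}}g^{-1}\,d\mathcal{H}^{n-1}\le-\mu'(t)$ one gets $B(P/I)\le F/I$, i.e.\ $\varphi(P/I)\le F/P$, hence $-\mu'\ge P/\varphi^{-1}(F/P)\ge\rho/\varphi^{-1}(F/\rho)$ by the monotonicity of $x\mapsto x/\varphi^{-1}(F/x)$. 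This needs convexity of $B$ only (Talenti's standing ``Young function'' hypothesis), which is weaker than convexity of $\varphi$ but is still not implied by \eqref{A_conditions2}; so to make the proof honest you must either import that hypothesis from the cited source, or settle for the non-sharp differential inequality obtained by H\"older with exponents $p$ and $p/(p-1)$, namely $P\le\big(\int g^{p-1}\big)^{1/p}\big(\int g^{-1}\big)^{(p-1)/p}\le(F/\delta)^{1/p}(-\mu')^{(p-1)/p}$, which uses only $\varphi(t)\ge\delta t^{p-1}$ and already yields Corollary \ref{cortalenti} --- the only form of the estimate the paper actually uses later.
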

\begin{proof}
\cite[Theorem 1]{Talenti}.
\end{proof}
From the Theorem above, we can derive the following statement.
\begin{Cor}\label{cortalenti}
Under the same hypotheses from Theorem \ref{teorematalenti}, $u^\sharp$ satisfies
\begin{equation}\label{Talenti0}
\underset{ \Omega^{\sharp}}{\,\sup\,}u^{\sharp}\le\,\underset{ \partial \Omega}{\,\sup\,}|u| \,+\Big(\frac{1}{n\,\omega_n\,\delta}\Big)^{\frac{1}{p-1}}\,\int_{0}^{\big(\frac{|\Omega|}{\omega_n}\big)^{1/n}} \rho^{-\frac{n-1}{p-1}}\bigg(\int_{B_\rho}f^{\sharp}(x)\,dx\bigg)^{\frac{1}{p-1}}\,d\rho.\\
\end{equation}
\end{Cor}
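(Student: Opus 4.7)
The plan is to derive \eqref{Talenti0} directly from Theorem \ref{teorematalenti} by three elementary moves: specializing the pointwise bound to the origin, applying the upper bound $\varphi^{-1}(s)\le (s/\delta)^{1/(p-1)}$ from \eqref{phi}, and changing variables from the measure parameter $r$ to the radial parameter $\rho$ through $r=\omega_n\rho^n$.

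First, since $u^{\sharp}$ is radially nonincreasing on $\Omega^{\sharp}$, its supremum equals $u^{\sharp}(0)$. Evaluating \eqref{Talenti} at $x=0$ gives
$$\sup_{\Omega^{\sharp}} u^{\sharp} \,\le\, \sup_{\partial\Omega}|u| + \int_{0}^{|\Omega|}\varphi^{-1}\!\bigg(\frac{r^{-1+1/n}}{n\,\omega_n^{1/n}}\int_{0}^{r}f^{*}(s)\,ds\bigg)\frac{r^{-1+1/n}}{n\,\omega_n^{1/n}}\,dr.$$
Applying \eqref{phi} then bounds the integrand by
$$\bigg(\frac{1}{\delta}\bigg)^{\!1/(p-1)}\bigg(\frac{r^{-1+1/n}}{n\,\omega_n^{1/n}}\bigg)^{\!p/(p-1)}\bigg(\int_{0}^{r}f^{*}(s)\,ds\bigg)^{\!1/(p-1)}.$$

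The rest is bookkeeping. Under the substitution $r=\omega_n\rho^n$, one has $dr=n\omega_n\rho^{n-1}\,d\rho$, the identity $\frac{r^{-1+1/n}}{n\,\omega_n^{1/n}}=\frac{\rho^{1-n}}{n\omega_n}$, and the upper limit becomes $(|\Omega|/\omega_n)^{1/n}$; moreover, by the computation in Remark \ref{obs2},
$$\int_{0}^{\omega_n\rho^n}f^{*}(s)\,ds \,=\, \int_{B_\rho}f^{\sharp}(x)\,dx,$$
so the last factor becomes $\int_{B_\rho}f^{\sharp}\,dx$. Collecting the constant $(n\omega_n)^{-p/(p-1)}$ from the $G(r)^{p/(p-1)}$ factor together with the $n\omega_n$ from $dr$ yields the overall prefactor $(n\omega_n\delta)^{-1/(p-1)}$, while combining the exponents $(1-n)p/(p-1)+(n-1)$ on $\rho$ produces the weight $\rho^{-(n-1)/(p-1)}$. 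This is exactly \eqref{Talenti0}.

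No substantive obstacle is expected: the proof is a direct calculation from Theorem \ref{teorematalenti}. The only mildly delicate point is verifying that the powers of $\rho$ and of $n\omega_n$ combine to give precisely the stated constant and weight, which is the computation sketched above.
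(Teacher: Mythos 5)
Your proposal is correct and follows essentially the same route as the paper: specialize/take the supremum at the center, use the bound $\varphi^{-1}(s)\le (s/\delta)^{1/(p-1)}$ from \eqref{phi}, substitute $r=\omega_n\rho^n$, and identify $\int_0^{\omega_n\rho^n}f^*(s)\,ds=\int_{B_\rho}f^\sharp\,dx$ via Remark \ref{obs2}. The only difference is the (immaterial) order in which the estimate on $\varphi^{-1}$ and the change of variables are applied, and your exponent bookkeeping checks out.
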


\begin{proof}
Making the changes of variables
\begin{equation*}
s=\omega_n\,t^n\,,\,\,\,ds=n\,\omega_n\,t^{n-1}dt\,
\end{equation*}
in \eqref{Talenti}, it follows
\begin{equation*}
u^{\sharp}(x)\,\le\,\underset{ \partial \Omega}{\,\sup\,}|u| \,+\,\int_{\omega_n |x|^n}^{|\Omega|} \varphi^{-1}\bigg( \frac{r^{-1 + \frac{1}{n}}}{\omega_n^{-1+\frac{1}{n}}} \, \int_{0}^{\big( \frac{r}{\omega_n} \big)^{\frac{1}{n}}}f^{*}(\omega_n\,t^n) \, t^{n-1}\,dt\bigg)\, \frac{r^{-1 + \frac{1}{n}}}{n \omega_n^{\frac{1}{n}}}  \,dr.
\end{equation*}
If we consider now the change of variables
\begin{equation*}
r=\omega_n\,\rho^n\,,\,\,\,dr=n\,\omega_n\,\rho^{n-1}d\rho,
\end{equation*}
with
\begin{equation*}
r^{-1+1/n}\,=\,\omega_n^{-1+1/n}\,\rho^{-n+1},
\end{equation*}
it follows
\begin{equation*}
u^{\sharp}(x)\,\le\,\underset{ \partial \Omega}{\,\sup\,}|u| \,+\,\int_{|x|}^{\big(\frac{|\Omega|}{\omega_n}\big)^{1/n}} \varphi^{-1}\bigg(\rho^{-n+1}\int_{0}^{\rho}f^{*}(\omega_n\,t^n)\,t^{n-1}\,dt\bigg)\,d\rho.
\end{equation*}
Note that, from Remark \ref{obs2}, we get
$$ \int_{0}^{\rho}f^{*}(\omega_n\,t^n)\,t^{n-1}\,dt\,=\,\frac{1}{n\omega_n}\,\int_{B_\rho}f^{\sharp}(x)\,dx,$$
and so,
\begin{equation*}
u^{\sharp}(x)\,\le\,\underset{ \partial \Omega}{\,\sup\,}|u| \,+\,\int_{|x|}^{\big(\frac{|\Omega|}{\omega_n}\big)^{1/n}} \varphi^{-1}\bigg(\frac{\rho^{-n+1}}{n\omega_n}\,\int_{B_\rho}f^{\sharp}(x)\,dx\bigg)\,d\rho.
\end{equation*}
Using the lower estimate in \eqref{phi}, we get
\begin{equation*}
u^{\sharp}(x)\,\le\,\underset{ \partial \Omega}{\,\sup\,}|u| \,+\Big(\frac{1}{n\,\omega_n\,\delta}\Big)^{\frac{1}{p-1}}\,\int_{|x|}^{\big(\frac{|\Omega|}{\omega_n}\big)^{1/n}} \rho^{-\frac{n-1}{p-1}}\,\bigg( \int_{B_\rho}f^{\sharp}(x)\,dx\bigg)^{\frac{1}{p-1}}\,d\rho,
\end{equation*}
from which we obtain \eqref{Talenti0}.
\end{proof}

\section{Proof of Theorem 1}

\hspace{1,2cm} The uniqueness of solutions is a direct consequence of the comparison principle in
\cite[Theorem 2]{BSZ}, presented in Preliminaries. For the existence, we split the proof into three steps.\\
\par \textit{1. Construction of a bounded solution.}\\

\par We consider a decreasing sequence of smooth compact sets $K_m$ satisfying, for all $m$\\
\hspace*{0,5in}$i)$ $K\Subset K_{m+1}\Subset K_m$\\
\hspace*{0,5in}$ii)$ $dist(\partial K, \partial K_m)\,\rightarrow\,0\,.$\\
\par Taking an increasing sequence of radii $R_m\,\rightarrow\,+\infty$, with $K_m\Subset B_{R_1}$, for all $m$, we continuously extend $\phi$ to the whole $\mathbb{R}^n$, keeping fixed $\sup\,|\phi|$ and setting $\phi = 0$ in $\mathbb{R}^n\setminus B_{R_1}$. We then look for the domains
$\Omega_m := B_{R_m}\setminus K_m$ and the problems
 \begin{equation}\label{problems}
 \begin{cases}
    \;\;\; -{\rm div}\big(\,|\,\nabla u\,|^{p-2}A(\,|\,\nabla u\,|\,)\nabla u\,\big)=f\,\, &\text{ in}\,\,\Omega_m\, \\
    \;\;\;\;\;\; u = \phi &  \text{ in } \partial K_m\,\\
    \;\;\;\;\;\; u = 0 &  \text{ in } \mathbb{R}^n \setminus B_{R_m}\,.
    \end{cases}
\end{equation}
By Theorem \ref{existenceForContinuousBoundaryData}, each of those problems has a weak solution
$u_m\in C(\overline{\Omega}_m) \cap C^1(\Omega_m)$.
\par  Now let $v_0$ be the supersolution given by Lemma \ref{lemma2} and assume, with no loss of generality, that $K$ contains the origin $0\in\mathbb{R}^n$, so that $0\notin\Omega_m$, for all $m$.
Hence, the function $v_0+\sup\phi$ is then a supersolution in $\Omega_m$, with $u_m \leq v_0+\sup\phi$ on $\partial \Omega_m$, for all $m$. Since $v_0+\sup\phi\leq C_0+\sup\phi$, we obtain by the comparison principle the uniform bound
\begin{equation}\label{unifbound}
\sup\,u_m\,\leq\, C_0+\sup\,\phi\,,\,\,\,\text{ for all } m\,.
\end{equation}
 Moreover, by the local H\"{o}lder regularity result in \cite[Theorem 1.1, p. 251]{Ladyzhenskaya}, there exists a $\gamma>0$ such that, for each compact $V$ of $\mathbb{R}^n\setminus K$, there is a constant $C>0$, depending on the parameters of the equation and $V$, such that
 $$|\,u_m(x)-u_m(y)\,|\,\le\,C|\,x-y\,|^\gamma\,,\,\,\,\text{for all }\,x,y\,\in\,V\,.$$
Thus, $u_m$ is also equicontinuous on $V$ and, by Arzel\'a-Ascoli's Theorem, we can obtain a subsequence of $u_m$ converging uniformly on $V$ to some continuous function. Considering then a sequence of compact sets $V_k$ such that $B\setminus K = \bigcup V_k$, by a standard diagonal argument, we can find a continuous function $u$ on $\mathbb{R}^n\setminus K$ and a subsequence of $u_m$ that converges to $u$ uniformly on any compact of $\mathbb{R}^n\setminus  K$. Furthermore, from Theorem 1 of \cite{Tolksdorf}, $\nabla u_m$ is uniformly bounded and equicontinuous in compacts. Hence, using again Arzel\'a-Ascoli's Theorem and a diagonal argument, it follows that up to a subsequence $\nabla u_m \to \nabla u$ uniformly in compacts, from which we can conclude that $u$ is a weak solution of \eqref{main} in $\mathbb{R}^n\setminus K$.
\hspace*{\fill}$\square$\\
\par \textit{2. Continuity of $u$ on the boundary.}\\
\par Let $x_0\in\partial K$, $\epsilon>0$ and consider by Lemma 1 the supersolutions $\,v_{a,x_0}\,$ on a large ball $B(x_0)\setminus\{x_0\}$. By the continuity of $\phi$, there is some $R>0$ such that
\begin{equation*}
|\,\phi(x)-\phi(x_0)\,|\,<\,\epsilon\,,\,\,\,\text{ for }|x-x_0|<R
\end{equation*}
so that
\begin{equation*}
\phi(x_0)+v_{a,x_0}(x)+\epsilon\,\geq\,\phi(x)\,,\,\,\,\text{ for }|x-x_0|\,<\,R\,,\,\,\,a\geq0\,.
\end{equation*}
We then choose $a$ sufficiently large in \eqref{lem1_est} to make
\begin{equation*}
\phi(x_0)+v_{a,x_0}(x)+\epsilon\,\ge\,\sup\,\phi\,,\,\,\,\text{ for }|x-x_0|\,\ge\,R\,.
\end{equation*}
Therefore, the function
\begin{equation*}
w_{a,x_0}^{+}\,:=\,\phi(x_0)+v_{a,x_0}+\epsilon
\end{equation*}
satisfies $w_{a,x_0}^{+}\,\ge\,\phi$, so that, in particular,
\begin{equation*}
w_{a,x_0}^{+}\,\ge\,\phi\,=u_m\,\,\,\,\,\,\,\,\text{in}\,\,\,\partial K_m\,,\,\,\,\,\text{for all}\,\,m\,.
\end{equation*}
By taking $a$ larger if necessary, we can also make
\begin{equation*}
w_{a,x_0}^{+}\,\ge\,u_m\,\,\,\,\,\,\,\,\,\text{in}\,\,\,\partial B\,,\,\,\,\,\text{for all}\,\,m\,.
\end{equation*}
Then by applying the comparison principle on $B\setminus K_m$ we obtain
\begin{equation*}
w_{a,x_0}^{+}\,\ge\,u_m\,\,\,\,\,\,\,\,\text{in}\,\,\,B\setminus K_m\,,\,\,\,\,\text{for all}\,\,m\,
\end{equation*}
from which follows
\begin{equation*}\label{barrier_ineq2}
w_{a,x_0}^{+}\,\ge\,u\,\,\,\,\,\,\,\,\text{in}\,B\setminus K
\end{equation*}
since $u_m$ converges to $u$ on $B\setminus K$. Finally, this implies
\begin{equation*}\label{limsup_ineq}
\underset{x\rightarrow x_0}{\limsup }\,u(x)\, \le \, \underset{x\rightarrow x_0}{\limsup }\,w_{a,x_0}^{+}(x)\, = \, \phi(x_0) + \epsilon\,
\end{equation*}
and by arbitrariness of $\epsilon$ we conclude
\begin{equation*}\label{limsup}
\underset{x\rightarrow x_0}{\limsup }\,u(x)\, \le \, \phi(x_0)\,.
\end{equation*}
By an analogous argument with the subsolution $w_{a,x_0}^{-}:=\phi(x_0)-v_{a,x_0}-\epsilon$ we can obtain the lower bound
\begin{equation*}\label{liminf}
\underset{x\rightarrow x_0}{\liminf }\,u(x)\, \ge \, \phi(x_0)\,
\end{equation*}
concluding the result.\\
\hspace*{\fill}$\square$\\

\par  \textit{3. Global H\"{o}lder Continuity of $u$.}\\
\par Assume $\phi $ is $\alpha$-H\"{o}lder continuous in $K$, with $\alpha = \frac{p-n}{p-1}$. We will show $u$ is $\alpha$-H\"{o}lder continuous in $\mathbb{R}^n$.\\
\par  Let $y\in K$, $R>0$ and $v_a=v_{a,y}$ a supersolution in $B_{R}(y)\setminus\{y\}$ as given in Lemma \ref{lem1}.
We claim that for all $a$ sufficiently large
$$  \phi(y) - v_a\, \leq \,u \,\leq \,\phi(y) + v_a\;\mbox{ in }B_{R}(y)$$
for all $y\in K$. For this, putting $C=|\phi|_{\alpha}$, the H\"{o}der seminorm of $\phi$ in $K$, we have by definition
$|\,\phi(z)-\phi(y)\,|\leq C|\,z-y\,|^{\alpha}$, for all $z\in K$, hence
\begin{equation*}
 \phi(y)-C|\,z-y\,|^{\alpha}\,\leq\,\phi(z)\,\leq\, \phi(y)+C|\,z-y\,|^{\alpha}\,\,\,\mbox{ for all }z\in K\,.
\end{equation*}
Now by estimate \ref{lem1_est} we see that for all $a$ large enough $v_a$ satisfies
$$ \,C\,|\,x-y\,|^{\alpha}\, \le\, v_a(x) \,\mbox{ for all }\,x\in B_{R}(y)$$
so that from last inequality it follows
\begin{equation}\label{eq4'}
  \phi(y) - v_a\,\leq\,\phi\, \leq \, \phi(y) + v_a\;\mbox{ in } K\cap B_{R}(y)\,.
\end{equation}
Now taking $a$ larger if necessary, by estimate \eqref{lem1_est} we can also ensure that
$$ |\,u-\phi(y)\,|\,\le\,2\sup |u|\, \le\, v_a \,\mbox{ in }\, \partial B_{R}(y)$$
and so
\begin{equation*}
 \phi(y) - v_a\,\le\,u\, \le\, \phi(y) + v_a \,\mbox{ in }\partial B_{R}(y)\,.
\end{equation*}
Hence, noting that $\phi=u$ in $K$ in \eqref{eq4'}, we see the inequality above holds on $ \partial (B_{R}(y)\setminus K)\,$, so that, by the comparison principle, it extends to $B_{R}(y)\setminus  K$, concluding the claim. Notice the parameter $a$ depends only on $|\phi|_\alpha$ and $\sup u$.\\

\par Now let $x_0\in\mathbb{R}^{n}\setminus K$. It is enough to prove H\"{o}lder continuity on a neighbourhood of $K$ so we may assume $d(x_0,K)<R$. By the claim we have, in particular for all $y\in B_R(x_0)\cap K,$
$$  \phi(y)-v_{a,y}(x_0)\,\leq u(x_{0})\,\leq \,\phi(y)+v_{a,y}(x_0)$$
These inequalities give
$$ u(x_{0})- v_{a,y}(x_0)\,\leq\, \phi(y)\, \leq\, u(x_{0})+v_{a,y}(x_0)$$
and, as $\phi=u$ in $K$, we get
\begin{equation}\label{eq05}
u(x_{0})- v_{a,y}(x_0)\,\leq \,u(y) \,\leq \,u(x_{0})+ v_{a,y}(x_0)
\end{equation}
for all $ y\in\, B_R(x_0)\cap K$. Using the upper estimate \eqref{lem1_est} we have for some constant $C_1$
$$ v_{a,y}(x)\, \le\,C_1\,|\,x-y\,|^{\alpha}\,  \,\mbox{ for all }\,x\in B_{R}(y)$$
and, in particular,
$$ v_{a,y}(x_0)\, \le\,C_1\,|\,x_0-y\,|^{\alpha}\,.$$
Now using the lower estimate in \eqref{lem1_est} for the supersolution $v_{a,x_0}$ centered at $x_0$ we can obtain
$$ \,C\,|\,x_0-y\,|^{\alpha}\, \le\, v_{a,x_0}(y) \,$$
and so
$$ v_{a,y}(x_0)\, \le\,\frac{C_1}{C}\,v_{a,x_0}(y)\,.$$
From \eqref{eq05} it follows
\begin{equation}\label{eq06}
 u(x_{0})- \frac{C_1}{C}\,v_{a,x_0}(y)\,\leq \,u(y) \,\leq \,u(x_{0})+ \frac{C_1}{C}\,v_{a,x_0}(y)\,
\end{equation}
for all $y\in B_{R}(y)\cap K$. Provided that $C_1/C>1$, we have that $\frac{C_1}{C}\,v_{a,x_0}$ is also a supersolution in $B_R(x_0)$ and by the previous choice of $a$, still  $\frac{C_1}{C}\,v_{a,x_0}\ge 2\sup|u|$ in $\partial B_R(x_0)$. Therefore, \eqref{eq06} holds for all $y\in \partial (B_R(x_0)\setminus K)$ and by the comparison principle it also holds on $B_R(x_0)\setminus K$, so we have
\begin{equation*}
 u(x_{0})- \frac{C_1}{C}\,v_{a,x_0}(x)\,\leq \,u(x) \,\leq \,u(x_{0})+ \frac{C_1}{C}\,v_{a,x_0}(x)\,
\end{equation*}
for all $x\in B_R(x_0)$. Using again the upper estimate in \eqref{lem1_est} for $v_{a,x_0}$ we get
$$ v_{a,x_0}(x)\, \le\,C_1\,|\,x-x_0\,|^{\alpha}\,  \,\mbox{ for all }\,x\in B_{R}(x_0)$$
which gives
\begin{equation*}
 u(x_{0})- \frac{C_1^2}{C}\,|\,x-x_0\,|^{\alpha}\,\leq \,u(x) \,\leq \,u(x_{0})+ \frac{C_1^2}{C}\,|\,x-x_0\,|^{\alpha}\,
\end{equation*}
for all $x\in B_{R}(x_0)$, which is the H\"{o}lder continuity of $u$ at $x_0$. This concludes the statement as $x_0$ is arbitrary and the H\"{o}lder seminorm of $u$ is then bounded by $C_1^2/C$, independently of $x_0$.\\
\hspace*{\fill}$\square$\\

\begin{remark}
If we replace $\mathbb{R}^n$ by a bounded domain $\Omega \supset K$ in this theorem, using the same argument we can guarantee the existence of a bounded weak solution $u \in C(\overline{\Omega\setminus K})\cap C^1(\Omega\setminus K)$ of \eqref{nonhomogeneousPr}, under the same hypotheses on $A$, $K$, $p$, and $\phi$. With respect to $f$, we only have to assume that $f$ is bounded, condition \eqref{f} is not necessary. Indeed, since $\Omega$ is bounded, there is no need of global barriers given by Lemma \ref{lemma2}. The local barriers of Lemma \ref{lem1} are sufficient. For uniqueness, we would have to add, for instance, a boundary condition $u = \tilde{\phi}$ on $\partial \Omega$, otherwise there are infinitely many solutions.
\end{remark}

\section{Limit at infinity for $f$ bounded in $L^p$ spaces}

\par In this section, we prove $(b)$ of Theorem \ref{TeorPrinc}, that is, we suppose that $1<p<n$ and $f$ satisfies \eqref{Lr}, \eqref{Ltheta} and \eqref{Kgoes0}. One of the main tools is Corollary \ref{cortalenti}, which is a direct application of the comparison results established by Talenti in \cite{T1,Talenti}. We also use a Harnack inequality as stated in Corollary \ref{SRHarnack}.

\

\par \textit{1. Proof of item (b) of Theorem \ref{TeorPrinc}:} \\

 Let $u$ be a bounded weak solution of $$-{\rm div}\big(|\nabla u|^{p-2}A(|\nabla u|)\nabla u\big)\,=\,f $$ on $\mathbb{R}^n\backslash B_1$ and set $ m\,=\,\underset{|x|\rightarrow\infty}{\liminf}\,u$.
For a given $\varepsilon >0$ let $R_0>0$ be such that
\begin{equation*}
u(x)\,>\,m-\varepsilon\,\,\,\text{for all}\,\,x\,\,\text{such that}\,\,|x|\ge R_0\,,
\end{equation*}
so that the function
\begin{equation*}
v\,=\,u-m+\varepsilon\,
\end{equation*}
is a positive solution on $\mathbb{R}^n\backslash B_{R_0}$. We pick up a sequence $(x_k)$ with $|x_k|\rightarrow \infty$ and $R_0<|x_k|<|x_{k+1}|$ such that
\begin{equation*}
u(x_k)\,\le\,m+\epsilon,
\end{equation*}
hence
\begin{equation}\label{2epsilon}
v(x_k)\,\le\,2\epsilon.
\end{equation}
Now let $R_k = |x_k|$, $S_{R_k} = \partial B_{R_k}(0)$. By applying Corollary \ref{SRHarnack} to $v$ we get
\begin{equation*}
\underset{S_{\,R_k}}{\,\sup\,v}\,\le\,C\,\Big(\,\underset{S_{\,R_k}}{\,\inf\,}\,v + K(R_k)\,\Big)
\end{equation*}
for a positive constant $C$ independent of $k$, with
$$K(R_k) = \Big(\big(\,R_k/4\,\big)^{\,\theta}\,\|\,f\,\|_{\,\frac{n}{p-\theta}\,,\,\mathbb{R}^n\backslash B_{R_k/4}(0)}\Big)^{\,\frac{1}{p-1}}. $$
By hypothesis \eqref{Kgoes0}, $K(R)\rightarrow0$ as $R\rightarrow\infty$, so that $K(R_k)\le\epsilon$, for all $k$ sufficiently large. Hence, using \eqref{2epsilon}, it follows
\begin{equation*}
\underset{S_{\,R_k}}{\,\sup\,v}\,\le\,C\,\epsilon\,,\,\text{ for all $k$ sufficiently large, }
\end{equation*}
and, consequently,
\begin{equation}\label{vbound}
\underset{\partial A(R_k,R_{k+1})}{\,\sup\,v}\,\le\,C\,\epsilon\,,\,\text{ for all $k$ sufficiently large. }
\end{equation}

\par In the sequence, we obtain a bound for $v$ on the interior of the annuli  $A_k:=B_{R_{k+1}}(0) \backslash B_{R_k}(0)$. For that we apply Corollary \ref{cortalenti} to $v$ on $\Omega = A_k$, with
$$f_k\, =\, f\,|\underset{A_k} \, .$$
As $|\Omega| = \omega_n\,(\,R_{k+1}^n-R_k^n\,)\le\omega_n\,R_{k+1}^n$ we obtain
\begin{equation}\label{vestimate}
\begin{split}
\,\underset{ A_k^{\sharp}}{\,\sup\,v^{\sharp}}
\le \,\underset{ \partial A_k}{\sup\,v} \,+\Big(\frac{1}{n\,\omega_n\,\delta}\Big)^{\frac{1}{p-1}}\,\int_{0}^{R_{k+1}} \rho^{-\frac{n-1}{p-1}}\bigg(\int_{B_\rho}f_k^{\sharp}(x)\,dx\,\bigg)^{\frac{1}{p-1}}\,d\rho,\\
\end{split}
\end{equation}
where $A_k^{\sharp}$ is the ball centered at 0 with the same measure as $A_k$. Let us split the integral above as
\begin{equation}\label{splitt}
\begin{split}
&\,\,\,\,\,\int_{0}^{R_{k+1}} \rho^{-\frac{n-1}{p-1}}\bigg(\int_{B_\rho}f_k^{\sharp}(x)\,dx\bigg)^{\frac{1}{p-1}}\,d\rho\\
=\,&\,\,\,\,\int_{0}^{1} \rho^{-\frac{n-1}{p-1}}\bigg(\int_{B_\rho}f_k^{\sharp}(x)\,dx\bigg)^{\frac{1}{p-1}}\,d\rho\,\,+\,\int_{1}^{R_{k+1}} \rho^{-\frac{n-1}{p-1}}\bigg(\int_{B_\rho}f_k^{\sharp}(x)\,dx\bigg)^{\frac{1}{p-1}}\,d\rho.\\
\end{split}
\end{equation}
By the H\"{o}lder inequality we have, for any $q\geq1$,
\begin{equation*}
\int_{B_\rho}f_k^{\sharp}(x)\,dx\,\le\,\bigg(\int_{B_\rho}\big(f_k^{\sharp}\big)^{q}(x)\,dx \bigg)^{1/q}\,\big(\,\omega_n\,\rho^n\,\big)^{\frac{1}{q'}},\\
\end{equation*}
and using that
\begin{equation*}
\begin{split}
\bigg(\int_{B_\rho}\big(\,f_k^{\sharp}\big)^{q}(x)\,dx \bigg)^{1/q}\,&\le\,\|f\|_{\,q\,,\,A_k},
\end{split}
\end{equation*}
we get
\begin{equation}\label{holder}
\int_{B_\rho}f_k^{\sharp}(x)\,dx\,\le\,\omega_{n}^{\,\frac{1}{q'}}\,\rho^{\,\frac{n}{q'}}\,\|f\|_{\,q\,,\,A_k}.
\end{equation}
Using this with $q$ chosen as $\,s=\frac{n}{p-\theta}>\frac{n}{p}\,$ we have for the first integral in \eqref{splitt}
\begin{equation*}\label{splitestimate1}
\begin{split}
\int_{0}^{1} \rho^{-\frac{n-1}{p-1}}\bigg(\int_{B_\rho}f_k^{\sharp}(x)\,dx\bigg)^{\frac{1}{p-1}}\,d\rho\,\,
&\le\,\, \omega_n^{\,\frac{1}{s'(p-1)}}\,\int_{0}^{1}\rho^{-\frac{n-s}{s(p-1)}}\,d\rho\,\,\|f\|_{\,s\,,\,A_k}^{\frac{1}{p-1}}\\
&=\,\omega_n^{\,\frac{1}{s'(p-1)}}\,\,\frac{s(p-1)}{sp-n}\,\,\|f\|_{\,s\,,\,A_k}^{\frac{1}{p-1}}.
\end{split}
\end{equation*}
For the second integral in \eqref{splitt}, we use \eqref{holder} with $\,q=r<n/p\,$ to get
\begin{equation*}\label{splitestimate2}
\begin{split}
\int_{1}^{R_{k+1}} \rho^{-\frac{n-1}{p-1}}\bigg(\int_{B_\rho}f_k^{\sharp}(x)\,dx\bigg)^{\frac{1}{p-1}}d\rho \,
&\le \, \omega_n^{\,\frac{1}{r'(p-1)}}\int_{1}^{R_{k+1}} \rho^{-\frac{n-r}{r(p-1)}}\,d\rho\,\,\|f\|_{\, r,\,A_k}^{\frac{1}{p-1}}\\
&\le \, \omega_n^{\,\frac{1}{r'(p-1)}}\, \bigg(\frac{r(p-1)}{rp-n} \rho^{\frac{rp-n}{r(p-1)}} \bigg)\bigg|_{1}^{R_{k+1}}  \! \!\|f\|_{\,r\,,\,A_k}^{\frac{1}{p-1}}\\[2pt]
&\le \, \omega_n^{\,\frac{1}{r'(p-1)}}\,\,\frac{r(p-1)}{n-pr}\, \,\|f\|_{\,r\,,\,A_k}^{\frac{1}{p-1}}.
\end{split}
\end{equation*}
Putting these estimates together in \eqref{splitt}, we obtain as a result from \eqref{vestimate} that
\begin{equation}\label{result}
\begin{split}
\underset{ A_k^{\sharp}}{\,\sup\,v^{\sharp}}\,\le\,\underset{ \partial A_k}{\,\sup\,v} \,+\,C\,\Big(\,\|f\|_{\,r\,,\,A_k}^{\,\,\frac{1}{p-1}}\,+\,\|f\|_{\,s\,,\,A_k}^{\,\,\frac{1}{p-1}}\,\Big),
\end{split}
\end{equation}
for some constant $C$ depending only on $n, p, \delta, \Gamma, r, s$. Now by the hypotheses \eqref{Lr}, \eqref{Ltheta}, we have
\begin{equation*}\label{result2}
\|f\|_{\,r\,,\, A_k}^{\,\,\frac{1}{p-1}}\,\,+\,\,\|f\|_{\,s\,,\,A_k}^{\,\,\frac{1}{p-1}}\,\le\,\epsilon\,,\,\text{ for all $k$ sufficiently large. }
\end{equation*}
Then using \eqref{vbound}, \eqref{result} yields
\begin{equation*}\label{result3}
\underset{ A_k^{\sharp}}{\,\sup\,v^{\sharp}}\,\le\,C\,\epsilon\,,\,\text{ for all $k$ sufficiently large,}
\end{equation*}
with a constant $C$ depending only on $n, p, \delta, \Gamma, r, s$. Now, since
$$\underset{ A_k}{\,\sup\,v}\,=\underset{ A_k^{\sharp}}{\,\sup\,v^{\sharp}},$$
it follows that
\begin{equation*}
\underset{ A_k}{\,\sup\,v}\,\le\,C\,\epsilon\,,\,\,\text{ for all $\,k\,$ sufficiently large.}
\end{equation*}
Hence
\begin{equation*}
v(x)\,\le\,C\,\epsilon\,,\,\text{ for all $|x|$ sufficiently large,}
\end{equation*}
and then, by definition of $v$, we have
\begin{equation*}
u(x)-m\,\le\,C\,\epsilon\,,\,\text{ for all $|x|$ sufficiently large.}
\end{equation*}
Since $\epsilon$ is arbitrary it follows that
\begin{equation*}
\underset{|x|\rightarrow\infty}{\limsup}\,u\,\le\,m,
\end{equation*}
proving that $\,\underset{|x|\rightarrow\infty}{\lim}\,u(x)\,=\,m\,$.
\hfill$\square$

\

The next result provides an estimate for solutions of \eqref{nonhomogeneousPr} in the case (b) of Theorem \ref{TeorPrinc} in terms of the ``boundary data" and $f$. If $f = 0$, it is a kind of maximum principle.

\begin{Cor} Consider $p<n$ and $u\in C^{1}(\mathbb{R}^{n}\setminus B_1)$ a bounded weak solution of $$-{\rm div}\big(|\nabla u|^{p-2}A(|\nabla u|)\nabla u\big)\,=\,f $$ in $\mathbb{R}^{n}\setminus B_1$. If $f$ satisfies the assumptions \eqref{Lr}, \eqref{Ltheta}, and \eqref{Kgoes0}, then
\begin{equation*}
\begin{split}
\underset{ \mathbb{R}^n \setminus B_{R}(0)}{\,\sup\,u}\,\le\,\max\lbrace \underset{ \partial B_{R}(0)}{\,\sup\,u}, m\rbrace \,+\,C\,\Big(\,\|f\|_{\,r\,,\,\mathbb{R}^n \setminus B_{R}(0)}^{\,\,\frac{1}{p-1}}\,+\,\|f\|_{\,\frac{n}{p-\theta}\,,\,\mathbb{R}^n \setminus B_{R}(0)}^{\,\,\frac{1}{p-1}}\,\Big),
\end{split}
\end{equation*}
and
\begin{equation*}
\begin{split}
\underset{ \mathbb{R}^n \setminus B_{R}(0)}{\,\inf\,u}\,\geq\,\min\lbrace \underset{ \partial B_{R}(0)}{\,\inf\,u}, m\rbrace \,-\,C\,\Big(\,\|f\|_{\,r\,,\,\mathbb{R}^n \setminus B_{R}(0)}^{\,\,\frac{1}{p-1}}\,+\,\|f\|_{\,\frac{n}{p-\theta}\,,\,\mathbb{R}^n \setminus B_{R}(0)}^{\,\,\frac{1}{p-1}}\,\Big),
\end{split}
\end{equation*}
for $R >1$, where $m = \displaystyle \lim_{ |x| \rightarrow \infty } u(x)$ and $C = C(n, p, \delta, \Gamma, r, \theta)$. In particular, if $$\underset{\partial B_{R}(0)}{\,\inf\,u} \leq m \leq \underset{\partial B_{R}(0)}{\,\sup\,u},$$ we have
\begin{equation*}
\begin{split}
\underset{ \mathbb{R}^n \setminus B_{R}(0)}{\,\operatorname{osc} \,u}\,\leq\, \underset{ \partial B_{R}(0)}{\,\operatorname{osc} \,u} +\,C\,\Big(\,\|f\|_{\,r\,,\,\mathbb{R}^n \setminus B_{R}(0)}^{\,\,\frac{1}{p-1}}\,+\,\|f\|_{\,\frac{n}{p-\theta}\,,\,\mathbb{R}^n \setminus B_{R}(0)}^{\,\,\frac{1}{p-1}}\,\Big).
\end{split}
\end{equation*}
\end{Cor}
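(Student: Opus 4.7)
The plan is to extract from the proof of item (b) a uniform oscillation-type estimate valid on arbitrary bounded subdomains of $\mathbb{R}^n \setminus \overline{B_1}$, then apply it on annuli $B_{R'}\setminus \overline{B_R}$ and let $R' \to \infty$, using the existence of the limit $m$ already established.

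\textit{Step 1: a one-sided uniform estimate on bounded domains.} For any bounded open set $D \subset \mathbb{R}^n \setminus \overline{B_1}$ and any weak solution $w$ of the equation on $D$, I would show
\[
\sup_D w \,\le\, \sup_{\partial D} w + C\bigl(\|f\|_{L^r(D)}^{\,1/(p-1)} + \|f\|_{L^{n/(p-\theta)}(D)}^{\,1/(p-1)}\bigr),
\]
with $C = C(n,p,\delta,L,r,\theta)$ \emph{independent of $D$}. Since the operator is translation-invariant, $w - c$ is still a weak solution with the same right-hand side $f$ for any constant $c$. Applying Corollary \ref{cortalenti} to $w - c$ with $c = \inf_{\partial D} w$, and using that $\sup_{D^{\sharp}}(w-c)^{\sharp} = \sup_D |w-c|$, that $\sup_{\partial D}|w - c| = \operatorname{osc}_{\partial D} w$ (by the choice of $c$), and that $\sup_D(w-c) \le \sup_D|w-c|$, the stated inequality follows. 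The integral appearing in Corollary \ref{cortalenti} is controlled exactly as in the proof of item (b): split at $\rho = 1$ and use H\"older with exponent $s = n/(p-\theta)$ on $[0,1]$ and with exponent $r$ on $[1, (|D|/\omega_n)^{1/n}]$. The main technical point---and the only real obstacle---is that both resulting integrals converge \emph{independently} of $|D|$: the one near $0$ because $\theta > 0$, and the one near $\infty$ because $r < n/p$. This is what makes $C$ uniform in $D$.

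\textit{Step 2: passage to the limit.} Apply Step 1 to $D = B_{R'} \setminus \overline{B_R}$ with $R' > R$ and to $w = u$. Since item (b) gives $u(x) \to m$ as $|x| \to \infty$, for any $\varepsilon > 0$ and $R'$ sufficiently large one has $\sup_{\partial B_{R'}} u \le m + \varepsilon$, so $\sup_{\partial D} u \le \max\{\sup_{\partial B_R} u,\, m\} + \varepsilon$. Combined with the obvious monotonicity $\|f\|_{L^q(D)} \le \|f\|_{L^q(\mathbb{R}^n \setminus B_R)}$, letting $R' \to \infty$ and then $\varepsilon \to 0$ yields the first inequality of the corollary.

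\textit{Step 3: lower bound and oscillation.} The map $\xi \mapsto |\xi|^{p-2}A(|\xi|)\xi$ is odd, so $-u$ is a weak solution of the same equation with right-hand side $-f$; since $\|{-f}\|_{L^q} = \|f\|_{L^q}$ and $\lim_{|x|\to\infty}(-u) = -m$, applying Step 2 to $-u$ produces the claimed lower bound on $\inf u$. Finally, when $\inf_{\partial B_R} u \le m \le \sup_{\partial B_R} u$, the $\max$ and $\min$ on the right-hand sides collapse to $\sup_{\partial B_R} u$ and $\inf_{\partial B_R} u$ respectively; subtracting the lower bound from the upper bound yields the oscillation estimate, absorbing the factor $2$ into $C$.
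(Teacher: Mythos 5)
Your proposal is correct and follows essentially the same route as the paper: both rest on the annulus estimate obtained from Corollary \ref{cortalenti} plus the H\"older splitting at $\rho=1$ (with convergence of the inner integral from $\theta>0$ and of the outer one from $r<n/p$), followed by letting the outer radius tend to infinity using $u\to m$, treating the infimum via $-u$, and subtracting for the oscillation. The only (cosmetic) difference is the normalization used to handle the absolute value in Talenti's estimate: you shift by $\inf_{\partial D}w$, while the paper reuses the positive function $v=u-m+\epsilon$ from the proof of item (b).
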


\begin{proof}
Considering the equation \eqref{result}, using that $v= u - m +\epsilon$, and $$\underset{ A(R,\tilde{R})}{\,\sup\,v}\,=\underset{ A(R,\tilde{R})^{\sharp}}{\,\sup\,v^{\sharp}},$$
where $1 < R < \tilde{R}$, we obtain
\begin{equation}\label{resultu}
\begin{split}
\underset{ A(R,\tilde{R})}{\,\sup\,u}\,\le\,\underset{ \partial A(R,\tilde{R})}{\,\sup\,u} \,+\,C\,\Big(\,\|f\|_{\,r\,,\,A(R,\tilde{R})}^{\,\,\frac{1}{p-1}}\,+\,\|f\|_{\,\frac{n}{p-\theta}\,,\,A(R,\tilde{R})}^{\,\,\frac{1}{p-1}}\,\Big).
\end{split}
\end{equation}
Now, observe that
$$\underset{ \partial A(R,\tilde{R})}{\,\sup\,u} = \max \lbrace \underset{ \partial B_{R}(0)}{\,\sup\,u}, \, \underset{ \partial B_{\tilde{R}}(0)}{\,\sup\,u} \rbrace.$$
Then, making $\tilde{R} \rightarrow \infty$ in \eqref{resultu}, we have
\begin{equation*}
\begin{split}
\underset{ \mathbb{R}^n \setminus B_{R}(0)}{\,\sup\,u}\,\le\,\max\lbrace \underset{ \partial B_{R}(0)}{\,\sup\,u}, m\rbrace \,+\,C\,\Big(\,\|f\|_{\,r\,,\,\mathbb{R}^n \setminus B_{R}(0)}^{\,\,\frac{1}{p-1}}\,+\,\|f\|_{\,\frac{n}{p-\theta}\,,\,\mathbb{R}^n \setminus B_{R}(0)}^{\,\,\frac{1}{p-1}}\,\Big).
\end{split}
\end{equation*}
The result for the infimum follows considering the solution $-u$.

If  $\underset{\partial B_{R}(0)}{\,\inf\,u} \leq m \leq \underset{\partial B_{R}(0)}{\,\sup\,u}$, we obtain $$\max\lbrace \underset{ \partial B_{R}(0)}{\,\sup\,u}, m\rbrace = \underset{ \partial B_{R}(0)}{\,\sup\,u} \quad \rm{ and} \quad \min\lbrace \underset{ \partial B_{R}(0)}{\,\inf\,u}, m\rbrace = \underset{ \partial B_{R}(0)}{\,\inf\,u},$$
and the result for the oscillation follows when we subtract the estimates.
\end{proof}

\section{Limit at infinity for $f$ that decays to zero}

\par The goal of this section is to prove $(a)$, for $p<n$, and $(c)$ of Theorem \ref{TeorPrinc}. So, we assume that $A$ satisfies \eqref{A_conditions2} and $f$ decays to zero as in \eqref{f}. This condition allows us to improve Lemmas 1 and 2 getting better estimates for supersolutions as their domains of definition go farther away from the origin.
\begin{Lem1'}
Assume $p> n$, $f$ is bounded and satisfy the condition \eqref{f}. Then, for any $x_0\in S_{2R}$, $R>1$, there exists a family of radially symmetric supersolutions  $ v_{a,x_0}, a\ge0 $ of \eqref{nonhomogeneousPr} in $B_{R}(x_0)\backslash\{x_{0}\}$
satisfying
\begin{equation}\label{lem1'_est}
\bigg(\frac{1}{L}\bigg)^{\frac{1}{p-1}}a\,\frac{|x-x_0|^{\alpha}}{\alpha} \le v_{a,x_0}(x) \le  \bigg(\frac{1}{\delta}\bigg)^{\frac{1}{p-1}}\! \! \left(a+ \left(\frac{C_f}{n}\right)^{\frac{1}{p-1}} \! \! R^{-\frac{p-n+\epsilon}{p-1}} \right) \! \frac{|x-x_0|^{\alpha}}{\alpha},
\end{equation}
for $a \ge \displaystyle  0$, with $\alpha = \frac{p-n}{p-1}$.
\label{lema1'}
\end{Lem1'}
\begin{proof}
This comes from Lemma 1 and $\|f\|_{L^{\infty}(B_R(x_0))}\leq C_f\,R^{-p-\epsilon}$.
\end{proof}
The second improvement concerns about supersolutions defined on the complement of large balls.
\begin{Lem2'}\label{lemma2-prime}
Assume $p\ge n$ and $f$ satisfy the condition \eqref{f}. Then, for all $R > 1$, there exists a family of radially symmetric supersolutions $v_a, a\ge0$ of \eqref{nonhomogeneousPr} in $\mathbb{R}^n \backslash B_R(0)$ satisfying
\\[5pt]  (a) $v_a(R)=0$ and $v_a(r)$ is increasing in $[R,+\infty)$ for any $a\ge 0$;
\\ (b) $v_a$ is unbounded in $[R,+\infty)$ for $a > 0$; indeed, there exists a positive constant $c_0=c_0(n,p,L)$  such that
 $$v_a(r)\, \ge\, c_0\,a\, (r^{\alpha} - R^{\alpha}) \quad {\rm for} \quad r \ge R, \quad {\rm if} \quad p > n$$
 $$v_a(r)\, \ge\, c_0\,a\, (\log r - \log R) \quad {\rm for} \quad r \ge R, \quad {\rm if} \quad p = n;$$
 (c) $v_0$ is bounded in $[R,+\infty)$; indeed, there exists $C_0=C_0(n,p,\epsilon,C_f,\delta) >0$ such that
 $$v_0(r) \,\le\, C_0 \,(R^{-\frac{\epsilon}{p-1}} - r^{-\frac{\epsilon}{p-1}}) \quad {\rm for } \quad r \ge R;$$
 (d) $v_a(r) \to v_0(r)$ as $a \to 0$ for any $r \in [R,+\infty)$.
\label{upAndLowEstForVa-new2}
\end{Lem2'}
\begin{proof}
This follows the same lines of the proof of Lemma 2. In this case, the function $g$ can be taken as
\begin{equation}
g(r)\, =\,\frac{C_f}{r^{\,p+\epsilon}}\,\,,\,\,\,r\,\ge\,R \,.
\end{equation}
 Performing the integrations from $R$ onwards we obtain the supersolutions
\begin{equation}
 v(r) = \int_{R}^{r}\varphi^{-1}\left( \,\frac{\frac{C_f}{p-n+\epsilon}\big(\,t^{n-p-\epsilon}-R^{n-p-\epsilon}\,\big)+C}{t^{n-1}}\,\,\right)dt\,\,
\end{equation}
for $r\geq R$, with $C\,=\,|v'(R)|^{\,p-2}A\big(\,|v'(R)|\,\big)\,v'(R)$.
Choosing
\begin{equation}
C = \frac{C_f}{p-n+\epsilon} R^{n-p-\epsilon} + a^{\,p-1}\,,
\end{equation}
where $a \ge 0$, we have
\begin{equation}
 v_a(r) = \int_{R}^{r}\varphi^{-1}\left( \,\frac{\frac{C_f}{p-n+\epsilon} t^{n-p-\epsilon}+a^{p-1}}{t^{n-1}}\,\,\right)dt\, ,
\end{equation}
that is well defined and satisfies $(a)$ and $(d)$.
Then using the estimates \eqref{phi} for $\varphi^{-1}$, we get
\begin{equation*}
\begin{split}
    v_a(r) \,\geq&\, \bigg(\frac{1}{L}\bigg)^{\frac{1}{p-1}}\int_{R}^{r}\left(\frac{C_f}{p-n+\epsilon} t^{\,n-p-\epsilon} +a^{\,p-1} \right)^{\frac{1}{p-1}}\,t^{\,-\frac{n-1}{p-1}}\,dt\,\\
 \geq&\, \bigg(\frac{1}{L}\bigg)^{\frac{1}{p-1}}\int_{R}^{r} \,a\,t^{\,-\frac{n-1}{p-1}}\,dt\,\\
\end{split}
\end{equation*}
from which it follows
\begin{equation}\label{2'p>n}
    v_a(r) \,\geq\, \bigg(\frac{1}{L}\bigg)^{\frac{1}{p-1}}\,a\,\frac{(\,r^{\,\alpha}-R^{\,\alpha}\,)}{\alpha}
\end{equation}
in case $p>n$ and
\begin{equation}\label{2'p=n}
\begin{split}
    v_a(r) \,\geq&\, \bigg(\frac{1}{L}\bigg)^{\frac{1}{p-1}}\,a\,(\,\log r - \log R\,)
\end{split}
\end{equation}
if $p=n$. For the upper bound for $a=0$ we have for $p\ge n$
\begin{equation}\label{bound0}
\begin{split}
 v_0(r) \, &\le \, \bigg(\frac{C_f}{\delta(p-n+\epsilon)}\bigg)^{\,\frac{1}{p-1}}\,\int_{R}^{r} \left( \,\frac{\,t^{n-p-\epsilon}\,}{t^{n-1}}\,\,\right)^{\,\frac{1}{p-1}}\,dt\\
 &\le \,\bigg(\frac{C_f}{\delta(p-n+\epsilon)}\bigg)^{\,\frac{1}{p-1}\,}\,\Big(\,\frac{p-1}{\epsilon}\,\Big)\,(R^{-\frac{\epsilon}{p-1}} - r^{-\frac{\epsilon}{p-1}}).\\
\end{split}
\end{equation}
\end{proof}
\par Next result is a kind of extension of estimates obtained in \cite{S1} (or Proposition 3 of \cite{BSZ}) for the nonhomogeneous case. First we fix some notation. For a function $u$, let
$$ m_{R}=\inf_{S_{R}} u \quad \text{and} \quad M_{R}=\sup_{S_{R}} u \quad \text{for} \quad R >0,$$ where $S_{R}=\{ x \in \mathbb{R}^n \, : \, \| x \|= R \}$.
The oscillation of $u$ on $S_{R}$ is defined as
$$\underset{S_{R}\,}{\,osc\,u} = M_{R} - m_{R}.$$

\begin{Prop}\label{liouville}
 Let $u\in W^{1,p}_{loc}(\mathbb{R}^n\backslash \overline{B_1})$ be a bounded weak solution of \eqref{nonhomogeneousPr}, with $f$ satisfying condition \eqref{f}. Then, in case $p\ge n$, for all $R \ge R_0$,
\begin{equation} \label{almostmaximumprinciple}
 m_R - C_{0}R^{-\frac{\epsilon}{p-1}} \le u(x) \le M_{R} + C_{0}R^{-\frac{\epsilon}{p-1}} \quad \text{for} \quad x \in \mathbb{R}^n \backslash B_R ,
\end{equation}
where $ C_{0}=\left(\frac{C_f}{\delta(p-n+\epsilon)}\right)^{\frac{1}{p-1}}\left(\frac{p-1}{\epsilon}\right)$. In particular, if $R_0=1$, we have
the following global bound for $u$:
$$ \inf_{S_1} u - C_{0} \le u \le \sup_{S_1} u + C_{0}.$$
\end{Prop}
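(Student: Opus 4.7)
The plan is to deduce this estimate from the global barriers $v_a$ constructed in Lemma 2', by sandwiching $u$ between $M_R + v_a$ above and $m_R - v_a$ below on the exterior domain $\mathbb{R}^n\setminus B_R$, and then passing to the limit $a\downarrow 0$ to collapse the extra term to the bound given in item $(c)$ of Lemma 2'.

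More precisely, fix $R\ge R_0$ and, for each $a>0$, let $v_a$ be the supersolution in $\mathbb{R}^n\setminus B_R$ provided by Lemma 2'. Set $w_a(x)=M_R+v_a(|x|)$. Since $v_a(R)=0$ by item $(a)$, we have $w_a\ge u$ on $S_R$ by definition of $M_R$. Since $u$ is bounded and $v_a$ is unbounded for $a>0$ by item $(b)$, there exists $R'>R$ (depending on $a$) such that $w_a\ge u$ also on $S_{R'}$. The comparison principle (Theorem \ref{ComparisonPrincipleForBounded}) applied on the bounded annulus $B_{R'}\setminus \overline{B_R}$ then yields $u\le w_a$ there, and since $R'$ can be taken arbitrarily large, $u(x)\le M_R+v_a(|x|)$ for every $|x|\ge R$. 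Sending $a\to 0^+$ and using the convergence in item $(d)$ of Lemma 2', we obtain $u(x)\le M_R+v_0(|x|)$. The bound in item $(c)$ finally gives
\begin{equation*}
u(x)\,\le\,M_R+C_0\bigl(R^{-\epsilon/(p-1)}-|x|^{-\epsilon/(p-1)}\bigr)\,\le\,M_R+C_0\,R^{-\epsilon/(p-1)},
\end{equation*}
which is the upper inequality. The lower inequality follows identically applied to $-u$, which is a weak solution of the same type of equation with right-hand side $-f$ still satisfying \eqref{f}; the role of $v_a$ is then to build a subsolution $m_R-v_a$ lying below $u$ on $S_R$ and on $S_{R'}$ for $R'$ large.

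The global statement in the case $R_0=1$ is an immediate specialization, since $S_1$ is the natural inner boundary of the domain and the barrier construction of Lemma 2' is available for any $R>1$; a limiting argument $R\downarrow 1$ (using continuity of $u$ up to $S_1$ on compact subsets, which follows from interior $C^{1,\alpha}$-regularity) preserves the inequality.

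The only delicate point is confirming that the comparison principle applies, because $u$ is only assumed to be a bounded $W^{1,p}_{loc}$ weak solution while $v_a$ is a classical radial supersolution; however, the regularity theory invoked before Theorem \ref{ComparisonPrincipleForBounded} gives $u\in C^{1,\beta}_{loc}$, and the comparison principle for weak sub/supersolutions (Theorem \ref{ComparisonPrincipleForBounded}, since both $u$ and $w_a$ belong to $W^{1,p}$ on any bounded annulus and are continuous up to the boundary of it) is directly applicable. Thus no further obstacle arises, and the whole argument is essentially a standard barrier-plus-comparison sweep out to infinity, with the bound $(c)$ of Lemma 2' providing the explicit decay rate $R^{-\epsilon/(p-1)}$.
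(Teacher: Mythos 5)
Your proof is correct and follows essentially the same route as the paper: both use the global barriers $v_a$ of Lemma 2', compare $u$ with $M_R+v_a$ on annuli $B_{R'}\setminus \overline{B_R}$ using the unboundedness of $v_a$ for $a>0$ to handle the outer boundary, then let $a\to 0^+$ and invoke items $(c)$ and $(d)$ to obtain the bound $C_0R^{-\epsilon/(p-1)}$, with the lower estimate obtained symmetrically. No substantive differences.
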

\begin{proof}
Assume with no loss of generality that \eqref{f} holds for all $|x|\ge1$. Suppose now that the weak solution $u$ in question satisfies \eqref{alpha-subgrowth}. For $R\geq 1$, consider the family of radially symmetric supersolutions $v_{a}$ given by Lemma 2'. Hence the second property of $v_a$ and since $u$ is bounded (or satisfies the properties \eqref{alpha-subgrowth} or \eqref{alpha-subgrowth2}), we obtain for each $a > 0$ a $R_{a}> R$ such that
$$ M_{R}+ v_{a}(|x|)\geq u(|x|) \quad \text{for all} \quad |x| \geq R_{a}\,.$$
Consequently, the function $w_{a}(r):= M_{R}+ v_{a}(r),$ $r\geq R$, lies above $u$ on the boundary of the annulus $B_{R_{a}}\setminus B_{R}$. Then, by the comparison principle, $w_{a}\geq u$ on $B_{R_{a}}\setminus B_{R}$, that is, $w_{a}\geq u$ on $\mathbb{R}^{n}\setminus B_{R}$. Then, for $x\in\mathbb{R}^{n}\setminus B_{R}$, the third and fourth properties of $v_a$ in Lemma 2' imply that
\begin{equation}\label{bound}
u(x)\leq \lim_{a\rightarrow 0} w_{a}(|x|) = M_{R}+ v_{0}(|x|)< M_{R} + C_{0}R^{-\frac{\epsilon}{p-1}}\,.
\end{equation}
From \eqref{bound0}, we can see that $C_0$ is given by
$$C_{0}=\left(\frac{C_f}{\delta(p-n+\epsilon)}\right)^{\frac{1}{p-1}}\left(\frac{p-1}{\epsilon}\right).$$
Analogously, we can prove that $ u(x) \ge m_R - C_{0}R^{-\frac{\epsilon}{p-1}}$ for $x \in \mathbb{R}^n \setminus B_R$.\\
\end{proof}

\begin{Cor}
Under the hypotheses of Theorem \ref{TeorPrinc}, there exist the limits
 \begin{equation}
 \lim_{R\rightarrow+\infty} m_{R} \quad \text{and} \quad  \lim_{R\rightarrow+\infty} M_{R}.
 \end{equation}
\end{Cor}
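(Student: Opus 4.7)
The plan is to derive both limits as an essentially immediate consequence of the almost maximum principle established in Proposition \ref{liouville}. The key observation is that for any $R' \ge R \ge R_0$, every $x \in S_{R'}$ lies in $\mathbb{R}^n \setminus B_R$, so applying \eqref{almostmaximumprinciple} pointwise and then taking supremum and infimum over $S_{R'}$ yields
$$m_{R'} \ge m_R - C_0 R^{-\epsilon/(p-1)}, \qquad M_{R'} \le M_R + C_0 R^{-\epsilon/(p-1)}.$$
In words, $M_R$ cannot grow by more than a controlled slack as one moves to larger radii, and $m_R$ cannot drop by more than that same slack, with the slack vanishing as $R \to \infty$.

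From the second inequality I would pass to $\limsup_{R'\to\infty}$, obtaining $\limsup_{R'\to\infty} M_{R'} \le M_R + C_0 R^{-\epsilon/(p-1)}$ for every $R \ge R_0$. Since $C_0 R^{-\epsilon/(p-1)}\to 0$, taking $\liminf$ on the right as $R \to \infty$ gives
$$\limsup_{R'\to\infty} M_{R'} \,\le\, \liminf_{R\to\infty} M_R,$$
and together with the trivial reverse inequality this forces $\lim_{R\to\infty} M_R$ to exist in $[-\infty,+\infty]$. An entirely symmetric passage applied to the first inequality (take $\liminf$ on the left, $\limsup$ on the right) shows that $\lim_{R\to\infty} m_R$ exists as well.

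Since all the real work was done in Proposition \ref{liouville} via the radial supersolutions from Lemma 2', I do not anticipate any obstacle here — the corollary is a two-line consequence of that proposition. The only subtlety worth flagging is that the limits may a priori be $\pm\infty$, which is allowed by the statement and is the relevant possibility when $u$ is merely required to satisfy the subgrowth conditions \eqref{alpha-subgrowth} or \eqref{alpha-subgrowth2} rather than being bounded.
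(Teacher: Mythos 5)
Your proof is correct and is exactly the intended derivation: the paper states this corollary immediately after Proposition \ref{liouville} without proof precisely because it follows from the two-sided bound \eqref{almostmaximumprinciple} in the way you describe, namely that $M_{R'}\le M_R+C_0R^{-\epsilon/(p-1)}$ and $m_{R'}\ge m_R-C_0R^{-\epsilon/(p-1)}$ for $R'\ge R$, so that $\limsup M_{R'}\le\liminf M_R$ and $\liminf m_{R'}\ge\limsup m_R$. Your remark about the limits possibly being infinite in the one-sided-bounded setting is also consistent with the parenthetical in part (a) of Theorem \ref{TeorPrinc}.
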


\

\par \textit{2. Proof of item (a) of Theorem \ref{TeorPrinc}:}

\

We prove this only for the case that $u$ is bounded from below, since the argument for $u$ that is bounded from above is analogous and, combining these two conclusions, we can prove the result when $u$ is bounded. Furthermore, we can suppose w.l.g. that $u$ is nonnegative, since $u+c$ is also a solution if $c$ is a constant. \\
\par Let $u$ be a nonnegative weak solution of \eqref{nonhomogeneousPr} on $\mathbb{R}^n\setminus \overline{B_1}$ and set $ m\,=\,\underset{|x|\rightarrow\infty}{\liminf}\,u$. If $m = +\infty$, there is nothing to prove, so we assume $m<+\infty$. For a given $\varepsilon >0$, let $R_0>0$ be such that
\begin{equation*}
u(x)\,>\,m-\varepsilon\,\,\,\text{for all}\,\,x\,\,\text{such that}\,\,|x|\ge R_0\,
\end{equation*}
so that the function
\begin{equation*}
v\,=\,u-m+\varepsilon\,
\end{equation*}
is a positive solution on $\mathbb{R}^n\setminus B_{R_0}$. We pick up a sequence of points $(x_k)$, with $|x_k|\rightarrow \infty$, $R_0<|x_k|<|x_{k+1}|$, such that
\begin{equation*}
u(x_k)\,\le\,m+\epsilon
\end{equation*}
and, consequently,
\begin{equation}\label{2epsilon-Again}
v(x_k)\,\le\,2\epsilon\,.
\end{equation}
Now let $R_k = |x_k|$, $S_{R_k} = \partial B_{R_k}(0)$. By applying the Corollary \ref{SRHarnack} to $v$ we get
\begin{equation*}
\underset{S_{\,R_k}}{\,\sup\,v}\,\le\,C\,\Big(\,\underset{S_{\,R_k}}{\,\inf\,}\,v + R_k^{-\frac{\epsilon}{p-1}}\,\Big)
\end{equation*}
for a positive constant $C$ independent of $k$. Hence, since $R^{-\frac{\epsilon}{p-1}}\rightarrow0$ as $R\rightarrow\infty$, by \eqref{2epsilon-Again} it follows that
\begin{equation*}
\underset{S_{\,R_k}}{\,\sup\,v}\,\le\,C\,\epsilon\,,\,\,\,\text{ for all $k$ sufficiently large }
\end{equation*}
and, consequently,
\begin{equation}\label{vbound-Again}
\underset{\partial A(R_k,R_{k+1})}{\,\sup\,v}\,\le\,C\,\epsilon\,,\,\,\,\text{ for all $k$ sufficiently large. }
\end{equation}
We then proceed to bound $v$ on the interior of each annuli with the use of barriers. By Lemma 2', $(c)$, for each $k$, we have a positive supersolution $v_{\,0}=v_{\,0,\,k}$ in $\mathbb{R}^n\setminus \overline{B_{R_k}}$ satisfying
$$v_{\,0,\,k}\le C_0 \,R_k^{-\frac{\epsilon}{p-1}}\,.$$
Hence, the function
$$w_k(x)\,:=\,C\,\varepsilon \,+\,v_{\,0,\,k}(x)$$
where $C$ is the constant from \eqref{vbound-Again}, is such that, for any natural $l> k$, $w_k \ge v$ in $\partial A(R_k,R_{l})$. The comparison principle then gives $w_k \ge v$ in $ A(R_k,R_{l})$, from which follows the bound
$$v\,\le\,w_k\,\le\,C\,\varepsilon \,+ C_0\,R_k^{-\frac{\epsilon}{p-1}} \quad\text{ in }\,\,\mathbb{R}^n\setminus \overline{B_{R_k}}$$
and, by redefining the constant $C$,
$$v(x)\,\le\, C\,\epsilon\,,\,\,\,\text{ for all $|x|$ sufficiently large.}$$
Hence, by definition of $v$, we have
\begin{equation*}
u(x)-m\,\le\,C\,\epsilon\,,\,\,\,\text{ for all $|x|$ sufficiently large,}
\end{equation*}
from which follows that
\begin{equation*}
\underset{|x|\rightarrow\infty}{\limsup}\,u\,\le\,m \, ,
\end{equation*}
proving that $\,\underset{|x|\rightarrow\infty}{\lim}\,u(x)\,=\,m\,$. \hspace*{\fill}$\square$\\

\

\par For the last item of Theorem \ref{TeorPrinc}, we have to control of the oscillation of u:

\begin{Lem}
\par  Let $u\in C^{1}(\mathbb{R}^{n}\setminus B_1)$ a bounded solution of \eqref{nonhomogeneousPr} in $\mathbb{R}^{n}\setminus B_1$ and assume $f$ satisfy condition \eqref{f} and $p>n$. Then, there are constants $0<C<1$ and $K\geq0$, such that
\begin{equation}\label{osc}
\underset{S_{2R}\,}{\,osc\,u} \leq C\,\big(\,\underset{S_{R}\,}{\,osc\,u} + K.R^{-\frac{\epsilon}{p-1}}\,\big)
\end{equation}
for all $R\geq 1$.
\end{Lem}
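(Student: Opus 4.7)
The strategy is to combine Proposition \ref{liouville} with a chain-of-balls Harnack argument applied to two non-negative ``shifted'' versions of $u$. Throughout, write $\omega(R):=M_R-m_R$.

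\textbf{Setting up the non-negative solutions.} By Proposition \ref{liouville}, the functions
\[
w_- := u - m_R + C_0 R^{-\epsilon/(p-1)}, \qquad w_+ := M_R + C_0 R^{-\epsilon/(p-1)} - u
\]
are both non-negative on $\mathbb{R}^n\setminus B_R$, and each is a weak solution of an equation of the form \eqref{main} with right-hand side $\pm f$ (both obeying \eqref{f}). I cover $S_{2R}$ by finitely many open balls $B_j:=B_{\sigma R}(x_j)$, $x_j\in S_{2R}$, with $\sigma\in(0,1)$ fixed and $B_j\cap B_{j+1}\neq\varnothing$; the required number $N$ depends only on $\sigma$ and $n$, not on $R$. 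The crucial scale choice is that each enlarged ``Harnack ball'' $B_R(x_j)$ lies in $\mathbb{R}^n\setminus\overline{B_R}$ (because $|x_j|=2R$), precisely where $w_\pm\ge 0$.

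\textbf{Applying and chaining the Harnack inequality.} Theorem \ref{HarnakIneq} on $B_R(x_j)$ with parameter $\sigma$ yields
\[
\sup_{B_j} w_\pm \le C_H\bigl(\inf_{B_j} w_\pm + K(R)\bigr),
\]
with $K(R)=(R^{p-n}\|f\|_{L^1(B_R(x_j))})^{1/(p-1)}$ for $p>n$. Using $|f(y)|\le C_f|y|^{-(p+\epsilon)}\le C_f R^{-(p+\epsilon)}$ on $B_R(x_j)$, one gets $K(R)\le C_1 R^{-\epsilon/(p-1)}$. Chaining along the cover via $\inf_{B_j}w_\pm\le\sup_{B_{j+1}}w_\pm$, and using that every $y\in S_{2R}$ belongs to some $B_j$, I obtain constants $C_2\ge 1$, $C_3\ge 0$, depending only on $n,p,\epsilon,C_f,C_H,\sigma,N$, such that
\[
\sup_{S_{2R}} w_\pm \le C_2 \inf_{S_{2R}} w_\pm + C_3 R^{-\epsilon/(p-1)}.
\]

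\textbf{Combining the two estimates.} Substituting the definitions of $w_\pm$ produces
\begin{align*}
M_{2R}-m_R+C_0 R^{-\epsilon/(p-1)} &\le C_2\bigl(m_{2R}-m_R+C_0 R^{-\epsilon/(p-1)}\bigr)+C_3 R^{-\epsilon/(p-1)},\\
M_R-m_{2R}+C_0 R^{-\epsilon/(p-1)} &\le C_2\bigl(M_R-M_{2R}+C_0 R^{-\epsilon/(p-1)}\bigr)+C_3 R^{-\epsilon/(p-1)}.
\end{align*}
Adding and regrouping in terms of oscillations gives
\[
(1+C_2)\,\omega(2R) \le (C_2-1)\,\omega(R) + 2\bigl((C_2-1)C_0+C_3\bigr)R^{-\epsilon/(p-1)},
\]
and dividing by $1+C_2$ yields the claim with $C:=(C_2-1)/(C_2+1)\in[0,1)$ and $K$ determined so that $CK$ equals the remaining coefficient.

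\textbf{Main difficulty.} The delicate point is the coupling of scales. The Harnack ball must be large enough that $O_{n,\sigma}(1)$ of them cover $S_{2R}$, yet small enough (radius $\le R$) to stay inside $\mathbb{R}^n\setminus\overline{B_R}$, the region of non-negativity of $w_\pm$; enlarging the Harnack ball would reach into $B_R$ where $w_\pm$ can change sign, breaking the Harnack hypothesis. Choosing the radius equal to $R$ is therefore forced, and this is exactly what yields the doubling $R\mapsto 2R$ in the statement. The strict inequality $C<1$ then emerges automatically from the symmetric combination of the estimates for $w_-$ and $w_+$, which symmetrizes the Harnack ratio into the factor $(C_2-1)/(C_2+1)<1$.
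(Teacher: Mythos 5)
Your argument is correct, and it takes a genuinely different route from the paper. The paper proves \eqref{osc} by a Serrin-type barrier chain: starting at a minimum point of $u$ on $S_{2R}$, it places the explicit radial supersolutions of Lemma 1$'$ on a bounded number of balls $B_R(x_k)$ whose centers march along a geodesic of $S_{2R}$, compares with $u$ via the comparison principle and Proposition \ref{liouville}, and solves the resulting linear recurrence for the successive upper bounds $u_k$; the contraction factor $1-c$ comes out explicitly in terms of $\delta, L, n, p$. You instead run the classical Harnack-based oscillation-decay argument on the two nonnegative shifts $w_\pm$, whose nonnegativity on $\mathbb{R}^n\setminus B_R$ is exactly Proposition \ref{liouville}, chaining Theorem \ref{HarnakIneq} over a cover of $S_{2R}$ by balls of radius $\sigma R$ whose Harnack balls $B_R(x_j)$ stay outside $\overline{B_R}$ (your scale coupling is right: $|x_j|=2R$ forces radius $\le R$, and the covering number is then independent of $R$); adding the two resulting inequalities symmetrizes the Harnack ratio into $(C_2-1)/(C_2+1)<1$, and your bookkeeping of the $R^{-\epsilon/(p-1)}$ terms, including the estimate $K(R)\le C_1R^{-\epsilon/(p-1)}$ from \eqref{HarnacK2} and \eqref{f}, is correct. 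Note that the paper's Corollary \ref{SRHarnack1} cannot be cited directly here since it requires nonnegativity on all of $\mathbb{R}^n\setminus\overline{B_1}$, so your re-derivation at the scale of $B_R$ is necessary and you do it properly. The trade-off: your proof is shorter and standard, but its constants $C$ and $K$ depend on the non-explicit Harnack constant, whereas the paper's barrier argument needs only the comparison principle plus the explicit supersolutions and yields fully explicit $C$ and $K$. Two cosmetic points: $C=(C_2-1)/(C_2+1)$ could in principle be $0$ (if $C_2=1$), in which case you should enlarge it to any value in $(0,1)$ before factoring out $C$ to reach the exact form $C(\operatorname{osc}_{S_R}u+KR^{-\epsilon/(p-1)})$; and you should record that $w_+$ solves the equation with right-hand side $-f$ because the operator is odd in the gradient, which you do implicitly and which is all Theorem \ref{HarnakIneq} requires.
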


\begin{proof}
Given $R\geq 1$, let $x_{1}\in S_{2R}$ such that $u(x_{1})=m_{2R}$. For $x'\in S_{2R}$, let $\gamma\subset S_{2R}$ be the shortest geodesic joining $x_{1}$ to $x'$. By a recursive process starting at $x_{1}$, we obtain estimates for $u$ on successive balls with centers in $\gamma$, up to $x'$. \\
\par In the first step, we set $u_{1}=u(x_{1})$ and define for $x\in B_{R}(x_{1})$
$$w_{1}(x)=w_{1}(r)=u_{1}+v_{a_{1},x_1}(r), \;\; r=|x-x_{1}|\leq R,$$
where $v_{a_{1},x_1}$ is a supersolution in $ B_{R}(x_{1})$ given by Lemma 1'. We will chose $a_{1}$ so that
\begin{equation}
w_{1}(R)\geq M_{R}+C_{0}R^{-\frac{\epsilon}{p-1}}.
\label{w1-MR-C0}
\end{equation}
For this, using the lower estimate for $v_{a_1,x_1}$ in Lemma 1', it is sufficient to require that
$$u_{1}+\left(\frac{1}{L}\right)^{\frac{1}{p-1}}\,a_{1}\,\frac{R^{\alpha}}{\alpha}\,\geq\, M_{R}+C_{0}R^{-\frac{\epsilon}{p-1}},$$
which is equivalent to
\begin{equation*}
    a_{1}\,\geq\, \alpha R^{-\alpha}\left(\,\frac{1}{L}\,\right)^{-\frac{1}{p-1}}\left(M_{R}+C_{0}R^{-\frac{\epsilon}{p-1}}-u_{1}\right)\,.
\end{equation*}
Hence, setting
\begin{equation}\label{a1}
   a_{1}\,=\, \alpha R^{-\alpha}\left(\,\frac{1}{L}\,\right)^{-\frac{1}{p-1}}\left(M_{R}+C_{0}R^{-\frac{\epsilon}{p-1}}-u_{1}\right),
    \end{equation}
we conclude that \eqref{w1-MR-C0} is satisfied. From this and Proposition \ref{liouville}, we get that $w_{1}\geq u$ on $\partial B_{R}(x_{1})$ and, by the comparison principle,
\begin{equation}\label{bound2}
w_{1}\geq u\quad\mbox{on }B_{R}(x_{1})\,.
\end{equation}
Next, we wish to find some radius $R_{1}\leq R$ such that
$$w_1(r)\,<\,M_{R}+C_{0}R^{-\frac{\epsilon}{p-1}}+\frac{1}{\alpha}\left(\,\frac{C_f}{nL}\,\right)^{\frac{1}{p-1}}\,R^{-\frac{\epsilon}{p-1}}\,\,\,\text{ for all }\, r\,\le\, R_1\,. $$
In view of the upper estimate in Lemma 1' we have
\begin{equation}\label{w1}
w_{1}(r)=u_{1}+v_{a_{1},x_{1}}(r)\leq u_{1}+\bigg(\frac{1}{\delta}\bigg)^{\frac{1}{p-1}}\left(a_1+ \bigg(\frac{C_f}{n}\bigg)^{\frac{1}{p-1}} R^{-\frac{p-n+\epsilon}{p-1}}\, \right)\frac{r^{\alpha}}{\alpha}\,.
\end{equation}
Hence, it is enough to find $R_{1}\leq R$ such that
\begin{equation}\label{r1}
 \begin{split}
 &\,\,\,\,\,\,u_{1}+ \bigg(\frac{1}{\delta}\bigg)^{\frac{1}{p-1}}\left(a_1+ \bigg(\frac{C_f}{n}\bigg)^{\frac{1}{p-1}} R^{-\frac{p-n+\epsilon}{p-1}}\, \right) \frac{R_1^{\alpha}}{\alpha}
 \\ \le&\,\,M_{R}+C_{0}R^{-\frac{\epsilon}{p-1}}+\frac{1}{\alpha}\left(\,\frac{C_f}{nL}\,\right)^{\frac{1}{p-1}}\,R^{-\frac{\epsilon}{p-1}}\,.
\end{split}
\end{equation}
Replacing the expression of $a_{1}$ and solving for $R_{1}$ gives
\begin{equation*}
  R_{1} <\,  \left(\frac{\delta}{L}\right)^{\frac{1}{(p-1)\alpha}} R
    =\,  \left(\frac{\delta}{L}\right)^{\frac{1}{p-n}} R,
\end{equation*}
so we take
\begin{equation}\label{lambda}
    R_{1} =  \lambda R, \;\;\;\lambda= \frac{1}{2}\left(\frac{\delta}{L}\right)^{\frac{1}{p-n}}.
\end{equation}
To the next step, motivated by \eqref{w1}, we define
\begin{equation*}
 u_{2}\,=u_{1}+ \bigg(\frac{1}{\delta}\bigg)^{\frac{1}{p-1}}\left(a_1+ \bigg(\frac{C_f}{n}\bigg)^{\frac{1}{p-1}} R^{-\frac{p-n+\epsilon}{p-1}}\, \right) \frac{(\,\lambda R\,)^{\alpha}}{\alpha}
\end{equation*}
which is the upper bound for $w_1$ in $B_{\lambda R}(x_1)$.
We then take
$$x_{2}\in \gamma\cap \partial B_{\lambda R}(x_{1})$$
the closest point to $x'$ in this intersection and define as before
$$w_{2}(r)=u_{2}+v_{a_{2},x_2}(r),\;\mbox{ for } r=|x-x_{2}|\leq R$$
with $v_{a_2,x_2}$ being the supersolution in $B_R(x_2)$ given in Lemma 1'. Analogously to the previous step, the choice
\begin{equation*}
a_{2}\, =\, \alpha R^{-\alpha}\left(\,\frac{1}{L}\,\right)^{-\frac{1}{p-1}}\left(M_{R}+C_{0}R^{-\frac{\epsilon}{p-1}}-u_{2}\right)
\end{equation*}
ensures that
\begin{equation*}
w_{2}\,\geq\, u\;\mbox{ on }B_{R}(x_{2})\,.
\end{equation*}
Also, the same calculation carried out in the first step shows
\begin{equation*}
w_{2}(r) < M_{R}+C_{0}R^{-\frac{\epsilon}{p-1}}+\frac{1}{\alpha} \left( \frac{C_f}{nL}\right)^{\frac{1}{p-1}}R^{-\frac{\epsilon}{p-1}}, \;\mbox{ for } r\leq \lambda R
\end{equation*}
for $\lambda$ already defined in \eqref{lambda}.
Next we take
$$ u_{3}=u_{2}+\left(\frac{1}{\delta}\right)^{\frac{1}{p-1}}\,\left(a_2+ \left(\frac{C_f}{n}\right)^{\frac{1}{p-1}} R\,^{-\frac{p-n+\epsilon}{p-1}} \right)\,\frac{(\,\lambda R\,)^{\alpha}}{\alpha}$$
and
$$x_{3}\in \gamma\cap \partial B_{\lambda R}(x_{2})$$
the closest point to $x'$ in this intersection, and repeat the procedure.
After $k-1$ steps, we reach at some point $x_{k}\in\gamma$, having defined
\begin{equation}\label{recurrence}
u_{k}=u_{k-1}+\,\left(\frac{1}{\delta}\right)^{\frac{1}{p-1}}\,\left(\,a_{k-1}+ \left(\frac{C_f}{n}\right)^{\frac{1}{p-1}} R\,^{-\frac{p-n+\epsilon}{p-1}}\, \right)\,\frac{(\,\lambda R\,)^{\alpha}}{\alpha}
\end{equation}
with
\begin{equation}\label{ak-1}
a_{k-1}\,=\, \alpha R^{-\alpha}\left(\,\frac{1}{L}\,\right)^{-\frac{1}{p-1}}\left(M_{R}+C_{0}R^{-\frac{\epsilon}{p-1}}-u_{k-1}\right)
\end{equation}
and
\begin{equation*}
u\leq u_{k}< M_{R}+C_{0}R^{-\frac{\epsilon}{p-1}}+\frac{1}{\alpha} \left( \frac{C_f}{nL}\right)^{\frac{1}{p-1}}R^{-\frac{\epsilon}{p-1}}\;\;\mbox{ in  }B_{\lambda R}(x_{k-1})\,.
\end{equation*}
From \eqref{recurrence}, \eqref{ak-1}, we obtain the recurrence
\begin{equation*}
\begin{split}
u_k\,&=\,u_{k-1}\left(\,1-\lambda^{\alpha}\left(\,\frac{L}{\delta}\,\right)^{\frac{1}{p-1}}\,\right)\,\,\,\,+ \\ &\,\,+\,\,\,\,\lambda^{\alpha}\left(\,\frac{L}{\delta}\,\right)^{\frac{1}{p-1}}\left(\,M_R+C_0\,R^{-\frac{\epsilon}{p-1}}+
 \frac{1}{\alpha}\left(\,\frac{C_f}{nL}\,\right)^{\frac{1}{p-1}}R^{-\frac{\epsilon}{p-1}}\,\right)\,,\\
 u_1\,&=\,m_{2R}
\end{split}
\end{equation*}
from which we determine
\begin{equation*}
\begin{split}
&u_{k}\;=\;M_{R}+C_{0}R^{-\frac{\epsilon}{p-1}}+\frac{1}{\alpha} \left( \frac{C_f}{nL}\right)^{\frac{1}{p-1}}R^{-\frac{\epsilon}{p-1}}\;\\
&-\left(  M_{R}+C_{0}R^{-\frac{\epsilon}{p-1}}-m_{2R}+\frac{1}{\alpha} \left( \frac{C_f}{nL}\right)^{\frac{1}{p-1}}R^{-\frac{\epsilon}{p-1}}\right)\left( 1- \lambda^{\alpha}\left(\frac{L}{\delta}\right)^{\frac{1}{p-1}}\right)^{k-1}
\end{split}
\end{equation*}
This comes from the fact that the solution to the recurrence relation
$$\,\,a\,u_k + b\,u_{k-1} + c\, =\, 0 $$
 is given by
$$ u_k\,=\,-\frac{c}{a+b}+\Big(\,u_1+\frac{c}{a+b}\,\Big)\Big(-\frac{b}{a}\Big)^{k-1}\,.$$

We stop the process when $\gamma$ is fully covered by the balls $B_{\lambda R}(x_k)$, which happens when the point $x_{k}$ reaches a distance to $x'$ less than $\lambda R$.
As the length of $\gamma$ is less than $2R \, \pi$ and each ball covers a segment over $\gamma$ with length greater than $\lambda R$, we see the number $l$ of balls needed to cover $\gamma$ is independent of $R$ and always less than $2\pi/\lambda +1$. Now, since $x'\in B_{\lambda R}(x_{l})$ and $u\le w_{l}\le u_{l+1}$ in $B_{\lambda R}(x_{l})$
it follows that
\begin{equation*}
\begin{split}
u(x') \, \leq \, u_{l+1} =& \,M_{R}+C_{0}R^{-\frac{\epsilon}{p-1}}+\frac{1}{\alpha} \left( \frac{C_f}{nL}\right)^{\frac{1}{p-1}}R^{-\frac{\epsilon}{p-1}}\\
&\,-\left( \,M_{R}+C_{0}R^{-\frac{\epsilon}{p-1}}-m_{2R}+\frac{1}{\alpha} \left( \frac{C_f}{nL}\right)^{\frac{1}{p-1}}R^{-\frac{\epsilon}{p-1}}\,\right)\,c
\end{split}
\end{equation*}
for
\begin{equation*}
c=\left( 1- \lambda^{\alpha}\left(\frac{L}{\delta}\right)^{\frac{1}{p-1}}\right)^{l}=\left(\,1 - \frac{1}{2^{\alpha}}\,\right)^l \leq \left( \frac{1}{2} \right)^{l} <1\,.
\end{equation*}
Being $x'$ arbitrary, we have $M_{2R}\leq u_{l+1}$ and, therefore,
\begin{equation*}
M_{2R}-m_{2R}  \leq  \,\left(\, M_{R}+C_{0}R^{-\frac{\epsilon}{p-1}}-m_{2R}+\frac{1}{\alpha} \left( \frac{C_f}{nL}\right)^{\frac{1}{p-1}}R^{-\frac{\epsilon}{p-1}}\,\right)(1-c).
\end{equation*}
Then using that $m_{2R}\geq m_{R} - C_{0}R^{-\frac{\epsilon}{p-1}} $, by Proposition \ref{liouville}, it comes
\begin{equation*}
M_{2R}-m_{2R} \leq  \left( M_{R}-m_{R}+2C_{0}R^{-\frac{\epsilon}{p-1}}+\frac{1}{\alpha} \left( \frac{C_f}{nL}\right)^{\frac{1}{p-1}}R^{-\frac{\epsilon}{p-1}}\right)(1-c),
\end{equation*}
that is,
\begin{equation}\label{osc5}
\underset{S_{2R}\,}{\,osc\,u}\,\le \,C\;\Big(\,\underset{S_{R}\,}{\,osc\,u} + K R^{-\frac{\epsilon}{p-1}}\,\Big),
\end{equation}
where
\begin{equation*}
C\,=\,1-c\,,\,\,\,\,\,\,\,K=2C_{0}+\frac{1}{\alpha} \left( \frac{C_f}{nL}\right)^{\frac{1}{p-1}}\,.
\end{equation*}
\end{proof}

\par \textit{3. Proof of item (c) of Theorem \ref{TeorPrinc}:}\\
\par From \eqref{osc5}, the proof follows by a standard argument. Indeed iterating it, we obtain
\begin{equation*}
\begin{split}
\underset{S_{2^{k}R}\,}{\,osc\,u} & \,\leq\,  C^k\,\Big(\,\underset{S_{R}\,}{\,osc\,u} + K R^{-\frac{\epsilon}{p-1}} \sum_{j=0}^{k-1}\bigg( \frac{2^{-\frac{\epsilon}{p-1}}}{C}\bigg)^{j}\,\,\Big)\\
\end{split}
\end{equation*}
We can assume without loss of generality that $C > 2^{-\frac{\epsilon}{p-1}}$.
Then we have
\begin{equation*}
\sum_{j=1}^{k}\bigg( \frac{2^{-\frac{\epsilon}{p-1}}}{C}\bigg)^{j}\,\,\le\,\,\frac{1}{1-\frac{2^{-\frac{\epsilon}{p-1}}}{C}}\,\,\le\,\,\frac{1}{C-2^{-\frac{\epsilon}{p-1}}}
\end{equation*}
and we get
\begin{equation}\label{oscb}
\begin{split}
\underset{S_{2^{k}R}\,}{\,osc\,u} & \,\leq\, C^k\,\bigg(\,\underset{S_{R}\,}{\,osc\,u}\,+ \frac{K\, R^{-\frac{\epsilon}{p-1}}}{\;C-2^{-\frac{\epsilon}{p-1}}\;}\,\bigg)\,\,\,\mbox{ for all }\;\;R\geq1.
\end{split}
\end{equation}
In particular,
\begin{equation}\label{oscb1}
\underset{S_{2^{k}}\,}{\,osc\,u} \,\leq\, C^k\,\bigg(\,\underset{S_{1}\,}{\,osc\,u}\,+ \frac{K}{\;C-2^{-\frac{\epsilon}{p-1}}\;}\,\bigg)
\end{equation}
Now, considering $x\in \mathbb{R}^{n}\backslash B_{1}$, there is some integer $k$ such that
\begin{equation}\label{|x|}
2^{k}\leq |x| \leq 2^{k+1}
\end{equation}
From \eqref{almostmaximumprinciple} and the assumption that $C> 2^{-\frac{\epsilon}{p-1}}$, we have
\begin{equation*}
\begin{split}
\underset{S_{|x|}\,}{\,osc\,u}\;\leq & \; \underset{S_{2^{k}}\,}{\,osc\,u}+2C_{0} \left( 2^{k} \right)^{-\frac{\epsilon}{p-1}}\\
 \leq &\;  \underset{S_{2^{k}}\,}{\,osc\,u}+2C_{0}C^{k},
\end{split}
\end{equation*}
and then, by \eqref{oscb1},
\begin{equation}\label{oscS|x|-new1}
\begin{split}
\underset{S_{|x|}\,}{\,osc\,u}&\leq \;\left( \underset{S_{1}\,}{\,osc\,u}+ \frac{2K}{\;C-2^{-\frac{\epsilon}{p-1}}\;}+ 2C_{0} \right)\;C^{k}
\end{split}
\end{equation}
Now \eqref{|x|} also gives
\begin{equation*}
\log|x|\,\le\,(k+1)\log2\,,\,\,\,\,\text{ hence }\,\,\,\,\, k\,\ge\,\frac{\log|x|}{\log2}-1
\end{equation*}
Therefore, as $C<1$, we have
\begin{equation*}
C^k\,\le\,\frac{C^{\frac{\log|x|}{\log2}}}{C}\,=\,\frac{\big(\,e^{\log C}\,\big)^{\frac{\log|x|}{\log2}}}{C}\,=\,\frac{|x|^{\frac{\log C}{\log2}}}{C}
\end{equation*}
and then, by \eqref{oscS|x|-new1}, if follows
\begin{equation}\label{oscS|x|-new2}
\begin{split}
\underset{S_{|x|}\,}{\,osc\,u}\,&\leq \,\frac{1}{C}\left( \underset{S_{1}\,}{\,osc\,u}+ \frac{2K}{\;C-2^{-\frac{\epsilon}{p-1}}\;}+ 2C_{0} \right)\,|x|^{\frac{\log C}{\log2}}
\end{split}
\end{equation}
\hfill$\square$

\section{Application to other problems}

As a consequence of Theorem \ref{TeorPrinc}, we have the following generalization:

\begin{Cor}
Let $u\in C^{1}(\mathbb{R}^{n}\backslash B_1)$ be a bounded weak solution of
\begin{equation*}
-{\rm div}\big(|\nabla u|^{p-2}A(|\nabla u|)\nabla u\big)\,=\,g(x,u)
\label{g-nonhomogeneousProblem}
\end{equation*}
in $\mathbb{R}^{n}\setminus B_1$,  where $A$ satisfies \eqref{A_conditions2}. Suppose that $|g(x,t)| \leq h_1(x) h_2(t)$, where $h_2 \in L^{\infty} (\mathbb{R})$.
Then the following hold:
\\[5pt] $(a)$ for $p >1$ the limit of $u$ at infinity exists provided $h_1$ satisfies \eqref{f}; \\
(In this case, assuming that $u$ is only bounded from above (or from below), then either $u$ converges at infinity or $\lim_{|x| \to +\infty} u(x) = -\infty \; (\text{or} +\infty)$;)
\\[5pt] $(b)$ for $1< p< n$ the limit of $u$ at infinity exists if, more generally, $h_1$ satisfies
\eqref{Lr}, \eqref{Ltheta} and \eqref{Kgoes0};
\\[5pt] $(c)$ for $p>n$, if $h_1$ satisfies \eqref{f}, there are positive constants $C, \beta$ such that
  $$ \big|\,\underset{\,|x|\rightarrow\infty}{\lim u}-u(x)\,\big| < C|x|^{-\beta} \quad \text{ for all } \quad |x| \quad {\rm large.} $$
Moreover, for $p\ge n$ these results also hold for unbounded weak solutions $u$ provided we assume that
\begin{equation}
  \lim_{x \to \infty} \frac{|u(x)|}{|x|^{\alpha}}=0 \quad {\rm where} \quad \alpha=\frac{p-n}{p-1} \quad \text{ if } \quad p > n
\label{alpha-subgrowthCor}
\end{equation}
or
\begin{equation}
  \lim_{x \to \infty} \frac{|u(x)|}{\log|x|}=0 \quad \text{ if } \quad p=n.
\label{alpha-subgrowth2Cor}
\end{equation} \\
\label{TeorPrincCor}
\end{Cor}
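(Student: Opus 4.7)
\textbf{Proof proposal for Corollary \ref{TeorPrincCor}.} The plan is to reduce the corollary directly to Theorem \ref{TeorPrinc} by freezing the $u$-dependence on the right-hand side. Given a weak solution $u$ of $-{\rm div}(|\nabla u|^{p-2}A(|\nabla u|)\nabla u)=g(x,u)$, define
\[
f(x) \,:=\, g(x,u(x)), \qquad x \in \mathbb{R}^n \setminus B_1.
\]
Since $u \in C^1(\mathbb{R}^n\setminus B_1)$, $f$ is at least measurable, and by hypothesis
\[
|f(x)| \,\le\, h_1(x)\, h_2(u(x)) \,\le\, \|h_2\|_{L^\infty(\mathbb{R})}\, h_1(x).
\]
Crucially, this pointwise bound does not depend on $u$ in any delicate way: it holds for bounded $u$ and for unbounded $u$ satisfying \eqref{alpha-subgrowthCor} or \eqref{alpha-subgrowth2Cor}, since $h_2$ is globally bounded on $\mathbb{R}$.

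Once $f$ is defined in this way, $u$ is a bounded (respectively, appropriately sub-growing) weak solution of the equation
\[
-{\rm div}\big(|\nabla u|^{p-2}A(|\nabla u|)\nabla u\big) \,=\, f(x) \quad \text{in } \mathbb{R}^n\setminus B_1,
\]
with $A$ satisfying \eqref{A_conditions2}. The task then reduces to verifying that $f$ inherits from $h_1$ the hypothesis required in the corresponding item of Theorem \ref{TeorPrinc}. For part $(a)$ and part $(c)$, if $h_1$ satisfies \eqref{f} with constants $C_{h_1}$ and $\epsilon$, then $f$ satisfies \eqref{f} with constants $\|h_2\|_\infty C_{h_1}$ and the same $\epsilon$. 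For part $(b)$, note that $L^q$-integrability of $h_1$ implies the same for $f$, with $\|f\|_q \le \|h_2\|_\infty \|h_1\|_q$; hence conditions \eqref{Lr}, \eqref{Ltheta}, and \eqref{Kgoes0} transfer from $h_1$ to $f$ directly.

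With $f$ meeting the required assumption in each case, Theorem \ref{TeorPrinc} applies verbatim to $u$, yielding the existence (and, in case $(c)$, the rate of convergence) of $\lim_{|x|\to\infty} u(x)$. The one-sided conclusion in the bracketed remark of $(a)$ follows from the one-sided version stated in Theorem \ref{TeorPrinc}\,$(a)$, again applied to this $f$.

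I do not anticipate any serious obstacle: the whole argument is a reduction, and the only point to double-check is that the sub-growth hypotheses on $u$ are used in Theorem \ref{TeorPrinc} only in conjunction with the Lemma 2$'$ barrier comparison (which does not involve the right-hand side beyond condition \eqref{f}), so replacing the original $f$ there by our frozen $g(\cdot,u(\cdot))$ causes no loss. The mild technical nuisance is just making the dominating bound $|g(x,u(x))|\le \|h_2\|_\infty h_1(x)$ genuinely independent of $u$, which is exactly why the hypothesis $h_2\in L^\infty(\mathbb{R})$ (rather than merely $L^\infty_{\mathrm{loc}}$) is imposed.
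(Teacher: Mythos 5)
Your reduction is exactly the paper's own proof: the authors simply set $f(x):=g(x,u(x))$, note that $|f|\le \|h_2\|_\infty h_1$ transfers each hypothesis on $h_1$ to $f$, and invoke Theorem \ref{TeorPrinc}. Your write-up fills in the same details correctly, so there is nothing to add.
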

\begin{proof}
For each case, the result follows considering $f(x):= g(x,u(x))$ and observing that $f$ satisfies
the conditions of the respective case of Theorem \ref{TeorPrinc}.
\end{proof}

\begin{remark}
In this corollary, replacing $h_2 \in L^{\infty} (\mathbb{R})$ by $h_2 \in L^{\infty}_{loc} (\mathbb{R})$ the result still holds, provided that we assume that $u$ is always bounded. That is, the assumption that $u$ is only bounded from above or from below in the item (a) or the assumption that $u$ satisfies \eqref{alpha-subgrowthCor} or \eqref{alpha-subgrowth2Cor} are not sufficient.

Observe that, in particular, we can apply this corollary to the bounded weak solutions of the ``eigenvalue problem"
$$ -{\rm div}\big(|\nabla u|^{p-2}A(|\nabla u|)\nabla u\big) = \sum_{i=1}^k V_i(x)\, t \, |t|^{p_i-2} + h(x) \quad \text{in} \quad \mathbb{R}^n \backslash B_1, $$
for $p_i > 1$, where $V_i(x)$ and $h(x)$ satisfy \eqref{f} or satisfy \eqref{Lr}, \eqref{Ltheta} and \eqref{Kgoes0} for $i =1,2, \dots, k$.
This result is related to those obtained in \cite{FrP1} and \cite{FrP2}.
\end{remark}

\newpage

\newpage

\noindent Leonardo Prange Bonorino\newline\noindent Universidade Federal do Rio
Grande do Sul\newline\noindent Brazil\newline\noindent leonardo.bonorino@ufrgs.br

\medskip

\noindent Lucas Pinto Dutra\newline\noindent Instituto Federal de Educa\c{c}\~ao, Ci\^encia e Tecnologia do Rio Grande do Sul - Campus Caxias do Sul\newline\noindent Brazil\newline\noindent lucas.dutra@caxias.ifrs.edu.br

\medskip

\noindent Filipe Jung dos Santos\newline\noindent Universidade Federal do Rio
Grande do Sul\newline\noindent Brazil\newline\noindent jung.filipe@gmail.com

\end{document}